\newtheorem{remark}{\bf Remark}[section]
\def\R{\mathbb{R}}
\def\N{\mathbb{N}}
\def\P{\mathbb{P}}
\def\E{\mathbb{E}}
\def\L{\mathbb{L}}
\def\R{\mathbb{R}}
\def\Z{\mathbb{Z}}
\def\1{\mbox{I\hspace{-.6em}1}} 
\def\cov{\mbox{Cov}\,}
\def\var{\mbox{Var}\,}
\def\1{\mbox{\hspace{.2em}I\hspace{-.6em}1}} 
\def\limiteasn{\renewcommand{\arraystretch}{0.5}
\begin{array}[t]{c}\stackrel{a.s.}{\longrightarrow} \\
{\scriptstyle
n\rightarrow \infty}\end{array}\renewcommand{\arraystretch}{1}}
\def\limiten{\renewcommand{\arraystretch}{0.5}
\begin{array}[t]{c}\stackrel{}{\longrightarrow} \\
{\scriptstyle
n\rightarrow \infty}\end{array}\renewcommand{\arraystretch}{1}}
\def\limiteloin{\renewcommand{\arraystretch}{0.5}
\begin{array}[t]{c}\stackrel{{\cal L}}{\longrightarrow} \\
{\scriptstyle
n\rightarrow \infty}\end{array}\renewcommand{\arraystretch}{1}}
\def\egaleloi{\renewcommand{\arraystretch}{0.5}
\begin{array}[t]{c}\stackrel{{\cal L}}{\sim } \\
{}\end{array}\renewcommand{\arraystretch}{1}} 
\def\limiteproban{\renewcommand{\arraystretch}{0.5}
\begin{array}[t]{c}\stackrel{{\P}}{\longrightarrow} \\
{\scriptstyle
n\rightarrow \infty}\end{array}\renewcommand{\arraystretch}{1}}
\def\limiteL1{\renewcommand{\arraystretch}{0.5}
\begin{array}[t]{c}\stackrel{{\L^1}}{\longrightarrow} \\
{\scriptstyle n\rightarrow+\infty}\end{array}\renewcommand{\arraystretch}{1}}
\theoremstyle{plain} 
\theoremstyle{remark}
\newcommand*\interior[1]{\overset{\mathsf{o}}{#1}}
  \newtheorem{prop}{Proposition}[section]
  \newtheorem{cor}{Corollary}[section]
\newtheorem{theo}{Theorem}[section]
 \newtheorem{lem}{Lemma}[section]
\begin{document}

\title{\bf A new estimator for LARCH processes}
\author{Jean-Marc Bardet, {\tt bardet@univ-paris1.fr}, \\ 
University Paris 1 Panth\'eon-Sorbonne, SAMM, France}
\maketitle
\begin{abstract}
The aim of this paper is to provide a new estimator of parameters for LARCH$(\infty)$ processes, and thus also for LARCH$(p)$ or GLARCH$(p,q)$ processes. This estimator results from minimising a contrast leading to a least squares estimator for the absolute values of the process. Strong consistency and asymptotic normality are shown, and convergence occurs at the rate $\sqrt n$ as well in short or long memory cases. Numerical experiments confirm the theoretical results and show that this new estimator significantly outperforms the smoothed quasi-maximum likelihood estimators or weighted least squares estimators commonly used for such processes.
\end{abstract}
\begin{quote}
{\em Keywords:} {\small LARCH process; Semiparametric estimation; Long memory process} \\
MSC2010:  Primary 62F12,  62M10; Secondary 91B84
\end{quote}

\section{Introduction} \label{Intro}
Linear AutoRegressive Conditional Heteroskedastic (LARCH) processes were introduced by Robinson (1991) to model the long-range dependence of volatility and leverage. They are studied for their stationarity and dependence properties in Robinson and Zaffaroni (1997), Giraitis {\it et al.} (2000), Berkes and Horv\'ath (2003), and Giraitis {\it et al.} (2004). A LARCH$(\infty)$ process $(X_t)_{t\in \Z}$ is defined by:
$$
X_t=\xi_t \, \Big(a_0+\sum_{j=1}^{\infty}a_j \, X_{t-j}\Big)\quad\mbox{for any $t\in \Z$},
$$
where $(\xi_t)_{t\in \Z}$ is a white noise. In what follows, we will consider a parametric class of such LARCH$(\infty)$ processes, {\it i.e.} $a_j=a_j(\theta)$ for any $j\in \N$ with $\theta \in \R^\ell$, where the functions $a_j(\cdot)$ are assumed to be known but the true parameter $\theta^*$ is unknown. This paper is devoted to studying the asymptotic properties of a new  estimator of $\theta^*$ (rather than a component of $\theta^*$, such as the location parameter, as was done in Beran, 2006) for LARCH$(\infty)$ processes when a trajectory $(X_1,\ldots,X_n)$ is observed (see more details in section \ref{Sec2}).  \\
LARCH$(\infty)$ processes, which conditionally represent heteroskedastic weak white noise, offer new perspectives for modeling financial data. This model has the advantage over GARCH formulations that volatility can be arbitrarily close to $0$, which is nevertheless the case for certain financial series (see, for example, the example on CAC40 index returns discussed in section \ref{Simu}). The square of a LARCH$(\infty)$ process can also exhibit the long memory property (see below), which is impossible for a stationary ARCH$(\infty)$ process (see more theoretical details in Giraitis {\it et al.}, 2009, and the same illustrative example in section \ref{Simu}).\\
~\\
For numerous affine causal processes, such as ARMA, GARCH, ARMA-GARCH, AR$(\infty)$, or ARCH$(\infty)$ processes, the Gaussian quasi-maximum likelihood (QML) provides a very accurate estimator (see further details in Bardet and Wintenberger, 2009). Even though a LARCH$(\infty)$ process or its special cases LARCH$(p)$ or GLARCH$(p,q)$ (see their definitions in \eqref{LARCHp} and \eqref{GLARCH}) are also causal affine time series, such a contrast cannot be used to estimate the parameter $\theta^*$. Indeed, the conditional variance of $X_t$ cannot be bounded near $0$, and this does not allow asymptotic results for such contrasts (see more details on this point in Beran and Sch\"utzner, 2009, Truquet, 2014, and in particular in Francq and Zako\"ian, 2010). Beran and Sch\"utzner (2009) and Truquet (2014) propose an interesting alternative estimator based on a family of smooth approximations of the QML estimate, and they establish the consistency and asymptotic normality of the estimator of $\theta^*$ in cases of short or long memory. Francq and Zako\"ian (2010) preferred to construct weighted least squares estimators, for which they also show consistency and asymptotic normality. Note that they also extend their results to AR$(p)$-LARCH$(q)$ processes as well as to Truquet. \\
~\\
We propose a new estimator obtained by minimizing a least-squares contrast of the absolute values of $(X_t)$ (see its precise definition in \eqref{thetaLAV}, section \ref{Nonstat}). Under assumptions that are not too restrictive, especially considering short- and long-term memory, strong consistency and asymptotic normality are established for this estimator. Moreover, only a fourth-order white noise moment is required for asymptotic normality (order $4$ in Truquet, 2014, and $5$ in Beran and Sch\"utzner, 2009, for the smoothed QML estimator, no order condition in Francq and Zako\"ian, 2010, for the weighted LS estimator when an appropriate weight family is chosen). A convergence rate $\sqrt n$ for this new estimator is proved for LARCH$(\infty)$ processes with short memory as well as for the smoothed QML estimator (Truquet, 2014) and for the weighted LS estimator (Francq and Zako\"ian, 2010), but with different asymptotic covariance matrices. This rate of convergence $\sqrt n$ is also established for the LARCH$(\infty)$ process with long memory, while for the smoothed QMLE in Beran and Sch\"utzner (2009) only a rate $n^{\beta}$ with $0 < \beta < 1/2$ is obtained. Note that such a rate of convergence is also obtained with QMLE for generalized quadratic ARCH processes in Grublyte {\it et al.} (2017). \\
Monte Carlo experiments confirm the asymptotic behavior of the estimator even for trajectories with not very large lengths. The performances of this new estimator are then compared with those obtained with the regularized QMLE (for which the choice of the regularization parameter is a real problem) and with those obtained with the weighted least squares estimator of Francq and Zako\"ian (2010). The results of these comparisons undoubtedly show the much faster convergence of this new estimator, especially compared to the smoothed QMLE. \\ 
~\\
The following section \ref{Sec2} is devoted to the definition and stationarity conditions of the considered LARCH$(\infty)$ processes. The main results concerning the definition and the asymptotic behavior of the new estimator are given in section \ref{Nonstat}. Numerical experiments are proposed in section \ref{Simu} and proofs are established in section \ref{Proofs}.
\section{LARCH$(\infty)$ processes} \label{Sec2}
For $\ell\in \N^*$ and $u=(u_i)_{1\leq i\leq \ell}\in \R^\ell$, denote $\displaystyle \|u \|=\sqrt{u^2_1+\cdots + u_\ell^2 }$ the usual Euclidian norm. More generally, for $k\in \N^*$, if $u=(u_{i_1,\ldots,i_k})_{1\leq i_1,\ldots,i_k \leq \ell} \in \R^{\ell^k}$, denote 
$ \|u \|=\sqrt{\sum_{1\leq i_1,\ldots,i_k \leq \ell}  u^2_{i_1,\ldots,i_k} }. $
Denote also $\|Z \|_p = \big (\E \big [ \|Z\|^p \big ] \big )^{1/p}$ for $p\geq 1$ where $Z$ is a random vector. For any $m\in \N^*$, any $\ell\in \N^*$ and $\psi:\theta \in \Theta\subset \R^\ell \mapsto \psi(\theta) \in \R$ such as  $\psi \in {\cal C}^m(\Theta)$, the space of $m$-times continuously differentiable functions on $\Theta$, denote for $1\leq k \leq m$:
\begin{multline*}
\partial _\theta \psi(\theta):= \Big ( \frac \partial {\partial \theta_i} \psi(\theta) \Big )_{1\leq i\leq \ell} \\
\mbox{and}\quad \partial^k _{\theta^k} \psi(\theta):= \Big ( \partial^k _{\theta_{i_1}\cdots  \theta_{i_k}} \psi(\theta) \Big )_{1\leq i_1,\ldots,i_k\leq \ell}=\Big ( \frac {\partial^k} {\partial \theta_{i_1}\cdots \partial \theta_{i_k}} \psi(\theta) \Big )_{1\leq i_1,\ldots,i_k\leq \ell}.
\end{multline*}
Here we study a LARCH$(\infty)$ process introduced in Robinson (1991) and also studied in Giraitis {\it et al.} (2000), Giraitis {\it et al.} (2004), Beran and Sch\"utzner (2009), Francq and Zako\"ian (2010) or Truquet (2014). For $r\geq 2$, we will consider the following assumption: \\
~\\
{\bf Assumption $A(r)$:} 
\begin{itemize}
\item $(\xi_t)_{t\in \Z}$ is a sequence of symmetric centered independent random variables with continuous distribution such as $\| \xi_0\|_1=1$ and $\| \xi_0 \|_r<\infty$;
\item For any $j\in \N$, $\theta \in \R^\ell \mapsto a_j(\theta) \in \R$ are known continuous functions and without loss of generality we will assume $a_0(\theta) > 0$ for any $\theta\in \R^\ell$. \\
\end{itemize}
We will define a LARCH$(\infty)$ process $(X_t)_{t\in \Z}$ using Assumption {\bf $A(r)$} with $r\geq 2$. Before this, define:
\begin{equation}\label{Theta}
\Theta (2)=\Big \{ \theta \in \R^\ell,~
\| \xi_0\|^2_2 \, \sum_{j=1}^{\infty} a^2_j(\theta) < 1 \Big \}.
\end{equation} 
Then, under Assumption {\bf $A(r)$} with $r\geq 2$, for $\theta^* \in \Theta$ where $\Theta$ is a compact subset of $\Theta(2)$, we define a LARCH$(\infty)$ process $(X_t)_{t\in \Z}$ by:
\begin{equation}\label{larch}
X_t=\xi_t \, \Big(a_0(\theta^*)+\sum_{j=1}^{\infty}a_j(\theta^*) \, X_{t-j}\Big)\quad\mbox{for any $t\in \Z$}.
\end{equation}
Under these conditions, Giraitis {\it et al.} (2004) have proved the stationarity of $(X_t)$ and the existence of $\|X_0\|_2$. From now on we will assume that $\theta^*$ is unknown 
\begin{remark}\label{Rem0}
The case $a_0(\theta)= 0$ implies $X_t=0$ a.s. for any $t\in \Z$, see Giraitis {\it et al.} (2000). This explains why, we assume $a_0(\theta) > 0$ for any $\theta\in \R^\ell$ in Assumption {\bf $A(r)$}. 
\end{remark}

\begin{remark}\label{Rem1}
Note that in Assumption {\bf $A(r)$} we assume $\| \xi_0\|_1=1$ and not $\| \xi_0\|_2=1$ as is usually done. This is explained by the expression of the estimator we will consider. However, the difference between these two normalization options is only a new parametrization, because with $\xi'_t=\xi_t /\|\xi_0\|_2$ for any $t \in \Z$ and using the linearity of its expression, \eqref{larch} could also be written as 
\begin{equation} \label{M2} 
X_t=\xi'_t \, \Big(a'_0(\theta^*)+\sum_{j=1}^{\infty}a'_j(\theta^*) \, X_{t-j}\Big)\quad\mbox{where $\|\xi'_0\|_2=1$ and $a'_j(\theta^*) =\|\xi_0\|_2 \, a_j(\theta^*)$ for any $j\in \N$}.
\end{equation} In the case of Gaussian white noise, for example, we have $\|\xi_0\|^2_2=\sigma_\xi^2=\pi/2$ when $\| \xi_0\|_1=1$.
\end{remark}
In the sequel, under Assumption {\bf $A(r)$} with $r\geq 4$, we will also consider $\| X_0 \|_4$ and for this we define 
\begin{equation}\label{Theta4}
\Theta(4)=\Big \{ \theta \in \R^\ell,~\|\xi_0 \|_4^4 \sum_{j=1}^{\infty} a^4_j(\theta)+ 6 \,
\| \xi_0\|^2_2 \, \sum_{j=1}^{\infty} a^2_j(\theta) < 1 \Big \}.
\end{equation} 
\begin{remark}\label{Rem00}
Unless we consider a particular distribution for the noise, for example and typically the Gaussian ${\cal N}\big (0, (\sqrt{\pi/2})^2 \big )$, the moments $\| \xi_0\|_2 \leq \|\xi_0 \|_4$ are unknown and can take any value greater than or equal to $1$ according to this distribution. So the sets $\Theta(2)$ and $\Theta(4)$ are indeed unknown if we consider only the hypothesis of a symmetric noise $(\xi_t)$ with a moment of order $4$ and satisfying $\E \big [|\xi_0|\big ]=1$. Note, however, that it is exactly the same when estimating the parameters of a GARCH$(p,q)$ or ARCH$(\infty)$ process by quasi-maximum likelihood, e.g. when we define the stationarity set from the Lyapunov exponents depending on the noise distribution, or when we consider a condition of $\| \xi_0\|_4$ for asymptotic normality. Numerically (see section \ref{Simu}), one will use domains of minimisation in $\theta$ much larger than $\Theta(2)$ or $\Theta(4)$, for example $0\leq a_1,\, a_2 \leq 1$ for a LARCH$(2)$ process, but the condition of belonging to $\Theta(2)$ will be able to be checked a posteriori from the value taken by the estimator. The fact that this condition is not verified when applying a long-memory LARCH$(\infty)$ model to financial data led us to choose another more complex long-memory LARCH$(\infty)$ model. 
\end{remark}
Three interesting special cases of LARCH$(\infty)$ processes can be mentioned: 
\begin{enumerate} 
\item A first special case of LARCH$(\infty)$ processes are the LARCH$(p)$ processes defined by:
\begin{equation}\label{LARCHp}
X_t=\xi_t \,\sigma_t\quad 
\mbox{with} \quad \sigma_t= a_{0}+ \sum_{i=1}^p a_{i}\, X_{t-i}, \quad \mbox{ for any } t \in \Z 
\end{equation}
Therefore, a LARCH$(p)$ process is a LARCH$(\infty)$ process defined in \eqref{larch} with $a_k(\theta)=a_k$ for $0\leq k \leq p$ and $\theta={}^t\big ( a_0,a_1,\ldots,a_p\big ) \in (0,\infty)\times \R^{p}$. For such a process, the sets $\Theta(2)$ defined in \eqref{Theta} and $\Theta(4)$ defined in \eqref{Theta4} become respectively:
\begin{eqnarray}
\nonumber \Theta_p(2)&=&\Big \{ \theta={}^t\big ( a_0,a_1,\ldots,a_p\big ) \in(0,\infty)\times \R^{p},~
\| \xi_0\|^2_2 \, \sum_{j=1}^{p} a^2_j < 1 \Big \}\quad \\
\label{Thetap} \mbox{and}~~ \Theta_p(4)&=&\Big \{ \theta={}^t\big ( a_0,a_1,\ldots,a_p\big ) \in(0,\infty)\times \R^{p},~\|\xi_0 \|_4^4 \sum_{j=1}^{p} a^4_j+ 6 \,
\| \xi_0\|^2_2 \, \sum_{j=1}^{p} a^2_j < 1 \Big \}. \quad
\end{eqnarray}
\item A natural extension of the LARCH$(p)$ processes under consideration are GLARCH$(p,q)$ processes, which follow the same procedure as the well-known transition from ARCH processes to GARCH processes. A GLARCH$(p,q)$ process is defined by
\begin{equation}\label{GLARCH}
X_t=\xi_t \,\sigma_t\quad 
\mbox{with} \quad \sigma_t= c_{0}+ \sum_{i=1}^p c_{i}\, X_{t-i}+\sum_{j=1}^q d_{j}\, \sigma_{t-j}, \quad \mbox{ for any } t \in \Z.
\end{equation}
To study such a process, one defines the polynomials $P(x)=1-\sum_{j=1}^q d_{j}\,x^j$ and $Q(x)=c_{0}+ \sum_{i=1}^p c_{i}\,x^i$. Then the previous iteration equation \eqref{GLARCH} is equivalent to $P(B)\, \sigma=Q(B)\, X$ where $B$ is the usual backward operator. In the following we assume that $P$ and $Q$ are coprime polynomials for $\theta=\theta^*$. \\
We define $\theta=\big (c_0,c_1,\ldots,d_1,\ldots,d_q \big ) '\in (0,\infty)\times \R^{p+q}$ and the coefficients $a_k(\theta)$ decrease exponentially decrease to $0$ when $k \to \infty$ (as it is usually known for ARMA$(p,q)$ processes since the roots of $P$ lie outside the unit circle). \\
For GLARCH$(p,q)$ process, the assumption for obtaining a stationary $2$nd-order solution of \eqref{GLARCH} is $\theta \in \Theta_{p,q}(2)$,  with 
\begin{equation}\label{Thetapq}
\Theta_{p,q}(2)=\Big \{ \theta \in (0,\infty)\times (-1,1)^{p+q},~\sum_{i=1}^{q} d^2_i+
\| \xi_0\|_2 \, \sum_{j=1}^{p} c^2_j <1 \Big \}.
\end{equation}
The calculation of $\Theta(4)$ for such GLARCH$(p,q)$ processes is not quite straightforward. In Giraitis {\it et al.} (2004) $\Theta(4)$ is simplified for GLARCH$(1,1)$ and it is established that:
$$
\Theta_{1,1}(4)=\Big \{ \theta={}^t(c_0,c_1,d_1)\in (0,\infty)\times \R^{2},~~\|\xi_0 \|_4^4 \, \frac {c^4_1}{1-d_1^4}+ 6 \,
\| \xi_0\|^2_2 \, \frac {c^2_1}{1-d_1^2} < 1 \Big \}.
$$
\item Another case we will study is that of LARCH$(\infty$) with long memory, {\it i.e.} such that there are $d(\theta)\in (0,1/2)$ and $L_\theta(\cdot)$ a slowly varying function such that:
\begin{equation}\label{LRD}
a_j(\theta)=L_\theta(j)\, j^{d(\theta)-1}\quad \mbox{for $j \in \N^*$}.
\end{equation}
This case was considered in particular in Robinson and Zaffaroni (1997) and Beran and Sch\"utzner (2009). In this paper, a parametric estimation procedure was studied for the case where $a_j(\theta)=c \, j^{d-1}$ for $j \in \N^*$ and $\theta={}^t(a_0,c,d)$ (see further details below). Note that in such a case $\sum_{j=1}^\infty |a_j(\theta)|=\infty$ but $\sum_{j=1}^\infty a^2_j(\theta)<\infty$.
\end{enumerate}
\begin{remark}
For this third type of example we focused on the long memory property and for that the parameter $d(\theta)$ (or simply the parameter $d$ in Beran and Sch\"utzner's example) is necessarily in $(0,1/2)$. However, nothing prevents to extend the definition of the process to $d(\theta)\in (-\infty,1/2)$ or to a compact set included in $(-\infty,1/2)$ when the estimator is applied. We would then lose the exclusive long memory character but we would gain in generality.
\end{remark}
\section{A new estimator of LARCH parameters} \label{Nonstat}
\subsection{Definition and consistency of the estimator} 
We consider here a special case of M-estimators for estimating $\theta^*$ from an observed trajectory $(X_1,\ldots,X_n)$ of a stationary solution of \eqref{larch}. For this purpose, let the following contrast function $\Phi$ for $x \in \R^\N$ and $\theta\in \R^\ell$ be defined by
\begin{equation}
\label{phiLAV} \Phi(x,\theta)=\Big (|x_1|-\big|a_0(\theta)+\sum_{j=1}^{\infty}a_j(\theta) \, x_{j+1}\big |\Big )^2.
\end{equation}
\begin{remark}
The choice of this contrast function $\Phi$ follows from the fact that under a classical identifiability assumption (see Assumption Id$(\Theta )$ below), $\theta^*$ is the unique minimum in $\Theta$ of $\E \big [\Phi\big ((X_{-k})_{k\geq 0},\theta\big )\big ]$ under the normalization condition $\|\xi_0\|_1=1$ (see the proof of Proposition \ref{consLARCH}, Part 3.). \\
Other contrast functions satisfy this property, such as $\Phi_4(x,\theta)=\Big ( x_1^2-\big (a_0(\theta)+\sum_{j=1}^\infty a_j(\theta) x_{j+1}\big )^2\Big )^2$ under the usual normalization condition $\|\xi_0\|_2=1$. Such M-estimators defined from $\Phi_4$ require moments of order 8 to preserve their asymptotic normality, while for $\Phi$ a moment of order 4 is sufficient. Note that weighted quadratic contrast $\Phi_{ FZ }$ derived from $\Phi_4$ (see its definition in \eqref{WLSE}) was defined in Francq and Zako\"ian (2010) and allows, with appropriate weights, to obtain the asymptotic normality without a moment condition (except $r\geq 1$). However, Monte Carlo experiments show that the convergence rate of the estimator defined by $\Phi$ is faster than that of the estimator defined by $\Phi_4$ or $\Phi_{ FZ }$. \\
Note, however, that contrast functions such as
$$
\Phi_2(x,\theta)=\Big ( x_1-\big (a_0(\theta)+\sum_{j=1}^\infty a_j(\theta) x_{j+1}\big )\Big )^2\quad \mbox{or}\quad \Phi_1(x,\theta)=\Big | x_1-\big (a_0(\theta)+\sum_{j=1}^\infty a_j(\theta) x_{j+1}\big )\Big |
$$ 
are not such that $\E \big [\Phi\big ((X_{-k})_{k\geq 0},\theta\big )\big ]$ has a unique minimum in $\theta^*$ on the set $\Theta$. For example, quick calculations show that:
\begin{multline*}
\E \big [\Phi_2\big ((X_{-k})_{k\geq 0},\theta\big )\big ]=\big (\|\xi_0\|_2^2-1\big )\, \E \Big [ \big (a_0(\theta^*)+\sum_{j=1}^\infty a_j(\theta^*) X_{-j}\big )^2\Big ] \\
+ \E \Big [\big (a_0(\theta)+\sum_{j=1}^\infty a_j(\theta) X_{-j}\big )^2- \big (a_0(\theta^*)+\sum_{j=1}^\infty a_j(\theta^*) X_{-j}\big )^2 \Big ],
\end{multline*}
which in general does not have a minimum at $\theta^*$. 
\end{remark}
\begin{remark}
Unlike, in particular, the papers by Francq and Zako\"ian (2010) or Truquet (2014), the case of an AR$(p')$-LARCH$(p)$ process or, more generally, an AR$(p')$-LARCH$(\infty)$ process is not treated here. Based on the $\Phi$ contrast chosen, it would also have been possible to extend the study to AR$(p')$-LARCH$(\infty)$ processes by considering the $\Phi'$ contrast such that:
$$
\Phi'(x,\theta')=\Big (\Big |x_1-\sum_{k=1}^{p'}b_k\,x_{1+k}\Big |-\Big|a_0(\theta)+\sum_{j=1}^{\infty}a_j(\theta) \, \Big |x_{j+1}-\sum_{k=1}^{p'}b_k\,x_{j+1+k}\Big |\Big )^2,
$$
where $\theta'={}^t\big (b_1,\ldots,b_{p'},{}^t \theta \big )$. The convergence and asymptotic normality proofs for LARCH$(\infty)$-processes proposed in the rest of the manuscript could then be extended to AR$(p')$-LARCH$(\infty)$ processes, but this would make them even more technical and difficult to follow.  
\end{remark}
Now we define the process $(\widetilde X)_{t\in \Z}$ by:
\begin{equation}
\label{Xtilde} 
\widetilde X_t=\Big \{ \begin{array}{ll}X_t& \mbox{for $t\geq 1$} \\
0& \mbox{for $t\leq 0$} \end{array} .
\end{equation}
From now on, let us consider $\Theta$ as a compact subset of $\Theta(2)$, implying that $(X_t)_{t\in\Z}$ is a stationary ergodic process satisfying $\|X_0\|_2<\infty$. Then define the following estimator:
\begin{equation}
\label{thetaLAV} \widehat \theta_n=\mbox{Arg}\! \min_{\! \! \! \!\! \! \theta \in \Theta} \, \frac1 n \, \sum_{t=1}^n \Phi\big ((\widetilde X_{t-k})_{k\geq 0} , \theta \big ).
\end{equation}
In the sequel, we add an assumption about the derivatives of functions $\theta \in \Theta \mapsto a_k(\theta)$ for $k\in \N$ (we use the convention $\partial^0_{\theta^0} a_k(\cdot)=a_k(\cdot)$). So for $i\in \N$ define:\\ 
~\\
{\bf Assumption $C_i(\Theta)$:} For any $k\in \N$, the functions $a_k \in {\cal C}^i(\Theta)$ and there exist $C_a > 0$ and $\overline d < 1/2$ satisfying 
\begin{equation}\label{CondAN}
\sup_{\theta \in \Theta}\Big \{\sum_{j=0}^i \big \|\partial^j_{\theta^j} a_k(\theta) \big \| \Big \} \leq C_a \, k^{\overline d-1} \quad \mbox{for any $k \in \N^*$.}
\end{equation}
As already mentioned in Beran and Sch\"utzner (2009), to prove the consistency and asymptotic normality of $\widehat \theta_n$, it is necessary to take the derivatives in $\theta$ of $\theta \in \Theta \mapsto M_\theta(t)$ for $t\in \Z$, where
\begin{equation} 
M_\theta(t) =a_0(\theta)+ \sum_{k=1}^\infty a_k(\theta)\, X_{t-k}\quad\mbox{for $t\in \Z$ and $\theta \in \Theta$.}
\end{equation}
Note that the convergence of this infinite sum in the $\L^2(\Omega)$-norm is done using a Volterra decomposition, as in Giraitis {\it et al.} (2000) or Giraitis {\it et al.} (2004). However, the existence of $M_\theta(t)$derivatives could be problematic, since the sequence $(a_k(\theta))$ is not summable in the case of a long memory. We therefore consider the assumption: \\
~\\
{\bf Assumption (S):} For every $t \in \Z$, $(M_\theta(t))_{\theta\in \Theta}$ is a separable stochastic process on $\Theta$. \\
~\\
Note that this assumption is not really restrictive, since a stochastic process can always be replaced by a separable version (see Remark 1 in Beran and Sch\"utzner, 2009). Moreover, as proved in Proposition 2 of Beran and Sch\"utzner (2009), Assumption (S) combined with Assumption $C_1(\Theta)$ (resp. $C_2(\Theta)$), where $\Theta$ is a compact set of the $\Theta(2)\subset \R^\ell$, implies that $\theta \in \Theta \mapsto M_{\theta}(t)$ is almost surely 
 is differentiable once (resp. twice). \\
We add a classical identification condition:\\
~\\
{\bf Assumption Id$(\Theta)$}: If $\theta, \,\widetilde \theta \in \Theta$,
 \begin{equation}\label{identLARCH}
\big (a_i(\theta)=a_i(\widetilde\theta)~ \mbox{for all $i \in \N$} \big )~\implies ~\big (\theta=\widetilde \theta \big).
\end{equation}
Then we obtain the following conditions for the consistency of the two estimators:
\begin{prop}\label{consLARCH}
Under Assumption {\bf $A(2)$}, if $\theta^*\in \Theta$ is an unknown parameter, where $\Theta$ is a compact subset of $\Theta(2)\subset \R^\ell$ defined in \eqref{Theta}, consider $(X_t)_{t \in \Z}$ as a stationary solution of \eqref{larch} and $(X_1,\ldots,X_n)$ an observed trajectory of $(X_t)$. Assume also Assumption $C_\ell(\Theta)$, (S) and Id$(\Theta)$. Then $\widehat \theta_n \limiteasn \theta^*$ where $\widehat \theta_n$ is defined in \eqref{thetaLAV}. 
\end{prop}
Proposition \ref{consLARCH} can be specified for the special cases considered earlier, starting with the LARCH$(p)$ processes.
\begin{cor}\label{consLARCHp}
Under Assumption $A(2)$, with $p\geq 1$, let  $(X_t)$ be a LARCH$(p)$ process, solution of 
\begin{equation}\label{LARCHp*}
X_t=\xi_t \,\sigma_t\quad 
\mbox{with} \quad \sigma_t= a^*_{0}+ \sum_{i=1}^p a^*_{i}\, X_{t-i}, \quad \mbox{ for any } t \in \Z,
\end{equation}
where $\theta^*={}^t\big ( a^*_0,a^*_1,\ldots,a^*_p\big )\in\Theta$, a compact subset of $\Theta_p(2)$ defined in \eqref{Thetap}. Let $(X_1,\ldots,X_n)$ be an observed trajectory of $(X_t)$.
Then $\widehat \theta_n={}^t\big ( \widehat a^{(n)}_0,\widehat a^{(n)}_1,\ldots,\widehat a^{(n)}_p\big ) \limiteasn \theta^*$ where $\widehat \theta_n$ is defined in \eqref{thetaLAV} 
\end{cor}
Note that since the functions $a_k$ considered in the Corollary \ref{consLARCHp} are constant functions, the Assumptions Id$(\Theta)$, $C_\ell(\Theta)$ and (S) are satisfied. The same is true if we consider the case of GLARCH$(p,q)$ processes:
\begin{cor}\label{consGLARCH}
Under Assumption $A(2)$, with $p\geq 1,~q\geq 1$, let  $(X_t)$ be a GLARCH$(p,q)$ process, solution of 
\begin{equation}\label{LARCHpq*}
X_t=\xi_t \,\sigma_t\quad 
\mbox{with} \quad \sigma_t= c^*_{0}+ \sum_{i=1}^p c^*_{i}\, X_{t-i}+\sum_{j=1}^q d^*_{j}\, \sigma_{t-j}, \quad \mbox{ for any } t \in \Z,
\end{equation}
where $\theta^*={}^t\big ( c^*_0,\ldots,c^*_p,d^*_1,\ldots,d_q^*\big )\in \Theta $, a compact subset of $\Theta_{p,q}(2)$ defined in \eqref{Thetapq}. Let  $(X_1,\ldots,X_n)$ be an observed trajectory of $(X_t)$. 
Then $\widehat \theta_n={}^t\big ( \widehat c^{(n)}_0,\ldots,\widehat c^{(n)}_p,\widehat d^{(n)}_1,\ldots,\widehat d^{(n)}_q\big ) \limiteasn \theta^*$ where $\widehat \theta_n$ is defined in \eqref{thetaLAV}. 
\end{cor}
The consistency of $\widehat \theta_n$ for long memory LARCH$(\infty)$ can also be derived from Proposition \ref{consLARCH}:
\begin{cor}\label{consLRD}
Assume {\bf $A(2)$} where the sequence $(a_k(\theta))_{k\in \N}$ satisfies \eqref{LRD}, let $\theta^*\in \Theta$ where $\Theta$ is a compact subset of $\Theta(2)$ defined in \eqref{Theta}. Let $(X_1,\ldots,X_n)$ be an observed trajectory of $(X_t)$, which is a stationary long memory LARCH$(\infty)$ solution of \eqref{larch}. Then, under Assumptions $C_\ell(\Theta )$, Id$(\Theta )$ and (S), $\widehat \theta_n \limiteasn \theta^*$ where $\widehat \theta_n$ is defined in \eqref{thetaLAV}.
\end{cor}
\begin{cor}[{\bf Example of long memory LARCH$(\infty)$ studied in Beran and Sch\"utzner, 2009}] \label{corBeran} 
Suppose that $(\xi_t)_{t\in \Z}$ is a sequence of symmetric centered independent random variables, such as $\| \xi_0\|_1=1$ and $\| \xi_0 \|_2<\infty $. Consider $\theta={}^t(a_0,c,d)$ and the sequence $(a_j(\theta))_{j\in \N}$ with $a_0(\theta)=a_0$ and $a_j(\theta)=c \, j^{d-1}$ for $j \geq 1$. Define
\begin{equation}\label{ThetaLRD2}
\Theta=\Big \{ {}^t(a_0,c,d) \in [a_m,a_M]\times [-c_M,c_M]\times [0,d_M],~~c_M^2 \, \|\xi_0\|_2^2 \sum_{k=1}^\infty k^{2\,d_M -2} < 1 \Big \},
\end{equation}
where $0<a_m \leq a_M <\infty$, $0\leq d_M<1/2$ and $0\leq c_M<\infty$. Let $(X_1,\ldots,X_n)$ be an observed trajectory of $(X_t)_{t\in \Z}$ which is a stationary long-memory LARCH$(\infty)$ solution of \eqref{larch} with parameter $\theta^*\in \Theta$. 
Then under Assumption (S),  $\widehat \theta_n \limiteasn \theta^*$ where $\widehat \theta_n$ is defined in \eqref{thetaLAV}.
\end{cor}

\begin{remark}
The condition $\|\xi_0\|_2<\infty$ required in Proposition \ref{consLARCH} and Corollaries \ref{consLARCHp}, \ref{consGLARCH}, \ref{consLRD}, and \ref{corBeran} can be compared with the conditions required in other works dealing with parametric estimation of LARCH processes. In Theorem 4.2. of Francq and Zako\"ian (2010), $\theta^*$ is estimated using a weighted LS estimator (see its definition in \eqref{WLSE}) and the consistency of this LS estimator is established under the condition $\|\xi_0\|_4<\infty$, except for appropriate weights for which $\|\xi_0\|_1<\infty$ is sufficient. In Theorem 4 of Beran and Sch\"utzner (2009), $\theta^*$ is estimated using a smoothed QML estimator (see its definition in \eqref{sQMLE}) and the condition $\|\xi_0\|_3<\infty$ is required for $\L^1$ consistency of this QML estimator. Finally, in Truquet (2014), the strong consistency of a smoothed QML estimator for LARCH$(p)$ processes is obtained under the condition $\|\xi_0\|_s < \infty$ with $s > 0$.
\end{remark}

\subsection{Asymptotic normality of the estimator} 
Under Assumption {\bf $A(4)$}, if $\theta^*\in \interior{\Theta}$, the interior of $\Theta$ where $\Theta$ is a compact subset of $\Theta(4)$, and under Assumption (S) and $C_2(\Theta)$, define, if they exist, the following matrices:
\begin{eqnarray} 
\nonumber \Gamma_1^*&:=&\E \Big [\partial_\theta M_{\theta^*}(0) \, {}^t \partial_\theta M_{\theta^*}(0)\Big ] 
 =\partial_\theta a_0(\theta^*) \, {}^t \partial_\theta a_0(\theta^*)+\sigma_X^2\, \sum_{k=1}^\infty \partial_\theta a_k(\theta^*) \, {}^t \partial_\theta a_k(\theta^*) \\
\label{Gamma1} && \hspace{4.3cm} ~~ \mbox{with} ~\sigma^2_X:= \E \big [X_0^2 \big ]=\frac {a^2_0(\theta^*)\, \sigma^2_\xi \,}{1 -\sigma^2_\xi \sum _{k=1}^\infty a^2_k(\theta^*)}\\ 
\mbox{and}\quad\Gamma_2^*&:=&\E \Big [\big (M_{\theta^*}(0) \big )^2 
\label{Gamma2} \, \partial_\theta M_{\theta^*}(0) \times {}^t \partial_\theta M_{\theta^*}(0)\Big ].
\end{eqnarray} 
Then the asymptotic normality of $\widehat \theta_n$ can be established:
\begin{theo}\label{ANLARCH}
Under Assumption {\bf $A(4)$}, if $\theta^*\in \interior{\Theta}$, is an unknown parameter, where $\Theta$ is a compact subset of $\Theta(4) \subset \R^\ell$, which is defined in \eqref{Theta4}, consider $(X_t)_{t \in \Z}$ as a stationary LARCH$(\infty)$ solution of \eqref{larch} and $(X_1,\ldots,X_n)$ an observed trajectory of $(X_t)$. Assume that Assumption (S) is satisfied as well as Assumption $C_{\ell+2}(\Theta)$ and Id$(\Theta)$. Then, if the matrices $\Gamma_1^*$ and $\Gamma_2^*$ defined in \eqref{Gamma1} and \eqref{Gamma2} are positive definite,
\begin{equation}\label{CLT}
\sqrt n \, \big (\widehat \theta_n -\theta^* \big ) \limiteloin {\cal N} \Big ( 0\, , \,(\sigma_\xi^2 -1) \, \big (\Gamma_1^* \big )^{-1}\Gamma_2^* \, \big (\Gamma_1^* \big )^{-1}\Big) 
\end{equation}
\end{theo}
\begin{remark}\label{Slu}
The expression of $\Gamma_2^*$ is not easy to simplify even in the simplest cases. This is not really a problem since, as usual, one can use the Slutsky Lemma, after defining the following estimators of $\sigma_\xi^2$, $\Gamma_1^*$ and $\Gamma_2^*$ by 
\begin{eqnarray*}
&& \widehat \sigma_\xi^2:= \frac 1 n \, \sum_{t=1}^n \frac { X_t^2}{\Big (a_0(\widehat \theta_n)+\sum_{k=1}^{t-1} a_k(\widehat \theta_n)\, X_{t-k} \Big )^2};  \\
&&\displaystyle \widehat \Gamma_1:=\frac 1 n \, \sum_{t=1}^n \Big (\partial_\theta a_0(\widehat \theta_n)+\sum_{k=1}^{t-1} \partial_\theta a_k(\widehat \theta_n)\, X_{t-k} \Big )\, {}^t \Big (\partial_\theta a_0(\widehat \theta_n)+\sum_{k=1}^{t-1} \partial_\theta a_k(\widehat \theta_n)\, X_{t-k} \Big );\\
&&\displaystyle \widehat \Gamma_2:=\frac 1 n \, \sum_{t=1}^n \Big (a_0(\widehat \theta_n)+\sum_{k=1}^{t-1} a_k(\widehat \theta_n)\, X_{t-k} \Big )^2 \Big (\partial_\theta a_0(\widehat \theta_n)+\sum_{k=1}^{t-1} \partial_\theta a_k(\widehat \theta_n)\, X_{t-k} \Big ) \times \\
&&\hspace{7cm}\times  {}^t \Big (\partial_\theta a_0(\widehat \theta_n)+\sum_{k=1}^{t-1} \partial_\theta a_k(\widehat \theta_n)\, X_{t-k} \Big ).
\end{eqnarray*}
Note that the consistency of $\widehat \sigma_\xi^2$ was proved in Francq and Zako\"ian (2010). The matrix $\widehat \Gamma_1$ and $\widehat \Gamma_2$ are also consistent estimators of $\Gamma_1^*$ and $\Gamma_2^*$ (see the proof in section \ref{Proofs}), and therefore
\begin{equation}\label{CLT2}
\sqrt n \,\big (\widehat \sigma_\xi^2 -1 \big )^{-1/2} \big (\widehat \Gamma_1 \big )^{1/2} \big ( \widehat \Gamma_2 \big )^{-1/2} \, \big (\widehat \Gamma_1\big )^{1/2} \big (\widehat \theta_n -\theta^* \big ) \limiteloin {\cal N} \big ( 0\, , \,I_d\big).
\end{equation}
Such a central limit theorem \eqref{CLT2}, satisfied by $\widehat \theta_n$ allows the computation of asymptotic confidence intervals or test thresholds on $\theta$. 
\end{remark}
The special case of GLARCH$(p,q)$ processes can be considered under simplified assumptions:
\begin{cor}\label{corGLARCH}
Assume the conditions of Corollary \ref{consGLARCH}, and also assume $\|\xi_0\|_4<\infty$ Let $\Theta$ be a compact set included in  $\Theta_{p,q}(4)$ and $\theta^*={}^t\big ( c^*_0,\ldots,c^*_p,d^*_1,\ldots,d_q^*\big ) \in \interior{\Theta}$. Then, with $\widehat \theta_n={}^t\big ( \widehat c^{(n)}_0,\ldots,\widehat c^{(n)}_p,\widehat d^{(n)}_1,\ldots,\widehat d^{(n)}_q\big )$, the central limit theorems \eqref{CLT} and \eqref{CLT2} hold. 
\end{cor}
In the case of LARCH$(p)$ processes, which are particular cases of GLARCH$(p,q)$ processes, we can go further into the details of the conditions for asymptotic normality:
\begin{cor}\label{corLARCH}
Assume the conditions of Corollary \ref{consLARCH}, and suppose also $\|\xi_0\|_4<\infty$, $\Theta$ defined by 
\begin{equation}\label{Thep}
\Theta=\Big \{{}^t(a_0,a_1,\ldots,a_p)\in [\underline a,\overline a]\times \R^p~~\mbox{with}~~\|\xi_0 \|_4^4 \sum_{j=1}^{p} a^{4}_j+ 6 \,
\| \xi_0\|^2_2 \, \sum_{j=1}^{p} a^{2}_j \leq r \Big \},
\end{equation}
where $0<\underline a<\overline a$ and $0<r<1$, and $\theta^*={}^t(a^*_0,a^*_1,\ldots,a^*_p)\in \interior{\Theta}$. 
Then, with $\widehat \theta_n={}^t\big ( \widehat a^{(n)}_0,\widehat a^{(n)}_1,\ldots,\widehat a^{(n)}_p\big )$, the central limit theorems \eqref{CLT} and \eqref{CLT2} hold. 
\end{cor}
As an example of computation of the asymptotic covariance, if we consider the case of a LARCH$(1)$ process, we obtain:
$$
\Gamma_1^*= \left (\begin{array}{cc} 1 & 0 \\ 0 & \sigma^2_X  \end{array} \right ) \quad \mbox{and}\quad \Gamma_2^*= \left (\begin{array}{cc} a_0^2+\sigma^2_X  & 2 \, a_0a_1 \sigma^2_X  \\ 2 \, a_0a_1 \sigma^2_X & a_0^2 \sigma^2_X+\E[X_0^4]  \end{array} \right ).
$$
This implies 
$$
\big (\Gamma_1^* \big )^{-1}\Gamma_2^* \, \big (\Gamma_1^* \big )^{-1}=\left (\begin{array}{cc} a_0^{*2}+\sigma^2_X  & 2 \, a^*_0a^*_1   \\ 2 \, a^*_0a^*_1 & \frac {a_0^{*2}}{\sigma^2_X}+\frac{\E[X_0^4]}{\sigma^4_X}  \end{array} \right ),
$$
where $\left \{ \begin{array}{ccl}
\sigma^2_X&=&a_0^{*2} \, \sigma^2_\xi \,\big (1-\sigma^2 _\xi a_1^{*2} \big )^{-1} \\
\E[X_0^4]&=&a_0^{*4} \, \E [\xi_0^4] \, \big ( 1+5 \,\sigma^2 _\xi a_1^{*2} \big )\big (1-\sigma^2 _\xi a_1^{*2}\big )^{-1}\big (1-\E [\xi_0^4] a_1^{*4} \big )^{-1}
\end{array} \right .
.$ \\
~\\
The asymptotic normality of the estimator $\widehat \theta_n$ can also be obtained in the case of long-memory LARCH$(\infty)$:
\begin{cor}\label{corLRD}
Assume the conditions of Corollary \ref{consLRD}, and further assume that $\|\xi_0\|_4<\infty$, $\Theta$ is a compact set included in $\Theta(4)\subset \R^\ell$ and $\theta^* \in \interior{\Theta}$.
Then, under Assumption $C_{\max(\ell,2)}(\Theta)$. and if $\Gamma_1^*$ and $\Gamma_2^*$ are positive definite matrices, the central limit theorems \eqref{CLT} and \eqref{CLT2} hold. 
\end{cor}
\begin{cor}[{\bf Example of long memory LARCH$(\infty)$ studied in Beran and Sch\"utzner, 2009}] \label{corBeran2} Under the assumptions of Corollary \ref{corBeran} and if $~\overline c^4  \|\xi_0\|_4^4 \sum_{i=1}^\infty j^{4\overline d -4} \!+\! 6  \overline c^2  \|\xi_0\|_2^2 \sum_{i=1}^\infty j^{2\overline d -2}\! < \!1$, then the central limit theorems \eqref{CLT} and \eqref{CLT2} hold. 
\end{cor}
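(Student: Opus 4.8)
The plan is to derive this corollary as a specialization of Corollary \ref{corLRD} to the Beran parametrization $a_0(\theta)=a_0$, $a_j(\theta)=c\,j^{d-1}$ for $j\geq1$, with $\theta=(a_0,c,d)$; all the work lies in verifying the hypotheses of Corollary \ref{corLRD} under the present assumptions. The consistency ingredients are already available since we assume the hypotheses of Corollary \ref{corBeran}, so it remains to check, for the asymptotic normality: (i) that the compact parameter set lies in $\Theta(4)$; (ii) that $a_0$ and the $a_j$ are ${\cal C}^2(\Theta)$ functions of the required long-memory form satisfying \eqref{CondAN}; (iii) Assumption S, which is assumed; and (iv) the positive definiteness of $\Gamma_1^*$ and $\Gamma_2^*$.

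For (i), I would translate the new summability hypothesis directly into the inclusion in $\Theta(4)$. Since $\sum_{j\geq1}a_j^4(\theta)=c^4\sum_{j\geq1}j^{4d-4}$ and $\sum_{j\geq1}a_j^2(\theta)=c^2\sum_{j\geq1}j^{2d-2}$, and since for each fixed $j\geq1$ the maps $d\mapsto j^{4d-4}$ and $d\mapsto j^{2d-2}$ are nondecreasing while $|c|\leq\overline c$, $d\leq\overline d$ throughout the parameter set, both series are dominated by their values at $(\overline c,\overline d)$. Substituting these bounds into definition \eqref{Theta4} shows that the assumed inequality $\overline c^4\|\xi_0\|_4^4\sum_{j\geq1}j^{4\overline d-4}+6\,\overline c^2\|\xi_0\|_2^2\sum_{j\geq1}j^{2\overline d-2}<1$ forces every admissible $\theta$ into $\Theta(4)$. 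For (ii), $a_0(\theta)=a_0$ is trivially smooth and $a_j(\theta)=c\,e^{(d-1)\log j}$ is ${\cal C}^\infty$ in $(c,d)$, so each $a_j\in{\cal C}^2(\Theta)$; the constant slowly varying function $L_\theta(j)=c$ puts these coefficients in the form \eqref{LRD} with $d(\theta)=d\in[0,\overline d]$.

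The delicate point, and the one I would treat as the main obstacle, is the decay condition \eqref{CondAN}. The bound on $|a_k(\theta)|$ is immediate: $\sup_{\theta\in\Theta}|c|\,k^{d-1}\leq\overline c\,k^{\overline d-1}$. The difficulty is the gradient at $\theta^*$, whose components are $\partial_{a_0}a_k=0$, $\partial_c a_k=k^{d^*-1}$ and $\partial_d a_k=c^*\,k^{d^*-1}\log k$, so that $\|\partial_\theta a_k(\theta^*)\|=O\big(k^{d^*-1}\log k\big)$ carries an extra logarithmic factor coming from differentiation in $d$. This is resolved by the interiority of $\theta^*$: since $\theta^*$ lies in the interior of the compact set, $d^*<\overline d$ strictly, whence $k^{d^*-1}\log k=o\big(k^{\overline d-1}\big)$ and one can choose $C_a>0$ with $\|\partial_\theta a_k(\theta^*)\|\leq C_a\,k^{\overline d-1}$ for all $k\geq1$, which together with the bound on $|a_k(\theta)|$ yields \eqref{CondAN}. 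Finally, for (iv), positivity of $\Gamma_1^*$ holds as soon as $c^*\neq0$, since then the gradient vectors $\partial_\theta a_0(\theta^*)=(1,0,0)$, $\partial_\theta a_1(\theta^*)=(0,1,0)$ and $\partial_\theta a_2(\theta^*)=(0,2^{d^*-1},c^*2^{d^*-1}\log2)$ already span $\R^3$ in the representation \eqref{Gamma1} (recall $\sigma_X^2>0$), and the analogous non-degeneracy argument applied to the weighted form \eqref{Gamma2} gives positivity of $\Gamma_2^*$. With (i)--(iv) in hand, Corollary \ref{corLRD} applies directly and delivers the central limit theorems \eqref{CLT} and \eqref{CLT2}.
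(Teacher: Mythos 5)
Your proposal is correct, and its skeleton is the same as the paper's: reduce the corollary to the general asymptotic-normality result (Corollary \ref{corLRD} / Theorem \ref{ANLARCH}) by verifying its hypotheses for the parametrization $a_j(\theta)=c\,j^{d-1}$. The difference is one of completeness, and on two points your version is genuinely more careful than the two-line proof in the paper, which merely computes $\partial_\theta a_k(\theta)={}^t\big(0,\,k^{d-1},\,c\,\log(k)\,k^{d-1}\big)$, asserts that $\sum_k\|\partial_\theta a_k(\theta)\|^2<\infty$ ``as required in \eqref{CondAN}'', and defers the positive definiteness of $\Gamma_1^*$ and $\Gamma_2^*$ to Lemma 5 of \cite{Beran} \emph{mutatis mutandis}. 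First, you actually verify that the displayed inequality places the whole parameter set inside $\Theta(4)$ (via monotonicity of $j^{4d-4}$ and $j^{2d-2}$ in $d$ for $j\geq 1$); the paper never states this, although it is the only reason the extra hypothesis appears. Second, you correctly observe that square-summability of the gradients is not literally what \eqref{CondAN} demands: the component $\partial_d a_k=c\,k^{d-1}\log k$ carries a logarithm that would violate the pointwise bound $C_a\,k^{\overline d-1}$ if $d^*=\overline d$. Your fix via interiority ($d^*<\overline d$ strictly) works under the natural reading of the parameter set; an even more robust fix is to replace $\overline d$ by any $\overline d'\in(\overline d,1/2)$ in \eqref{CondAN}, since that condition only requires the existence of \emph{some} exponent below $1/2$. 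Third, your spanning argument --- $\partial_\theta a_0=(1,0,0)$, $\partial_\theta a_1=(0,1,0)$, $\partial_\theta a_2=(0,2^{d^*-1},c^*2^{d^*-1}\log 2)$ span $\R^3$ when $c^*\neq 0$, and \eqref{Gamma1} exhibits $\Gamma_1^*$ as a positively weighted sum of the corresponding rank-one matrices --- is more self-contained than the citation of \cite{Beran}, and the caveat $c^*\neq 0$ is a real observation: at $c^*=0$ the model is not identifiable in $d$, Assumption Id$(\Theta)$ fails, and $\Gamma_1^*$ is singular, so the corollary implicitly excludes that point. For $\Gamma_2^*$ both you and the paper stop at ``analogous argument'', which still tacitly needs $\P(M^0_{\theta^*}=0)=0$; neither treatment closes that last step, but yours is no weaker than the paper's.
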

\begin{remark}
To our knowledge, the only result for estimating the memory parameter $d$ in the case of LARCH$(\infty)$ processes with long memory was obtained in Beran and Sch\"utzner (2009) using the smoothed QML estimator. However, the expression of this estimator uses only a small part of the sample $(X_1,\ldots,X_n)$, namely the last $n^{\beta}$ observations, where $\beta < 1-2d$, while $d$ is unknown to account for the long memory of the process. This leads to a convergence rate of $n^{\beta/2}$ for this truncated QML estimator, which is far less interesting than the convergence rate of $\sqrt n$ obtained with $\widehat \theta_n$. Monte Carlo experiments (see section \ref{Simu}) will also show that $\widehat \theta_n$ performs numerically better than these other estimators in terms of convergence rate, especially when compared to estimators based on QML. 
\end{remark}
\section{Numerical experiments} \label{Simu}
\subsection{Monte Carlo experiments}
In this section, we report the results of Monte Carlo experiments conducted with different LARCH processes. More specifically, we considered:
\begin{itemize}
\item Three different LARCH processes:
\begin{enumerate}
\item A LARCH$(2)$ process, with parameters $a_0=5$, $a_1=-0.2$ and $a_2=0.4$;
\item A GLARCH$(1,1)$ process, with parameters $c_0=2$, $c_1=0.3$ and $d_1=-0.6$;
\item A long memory LARCH$(\infty)$ process, with $\theta={}^t(a_0,c,d)$ and $a_0(\theta)=a_0$ and $a_k(\theta)=c \, k^{d-1}$. We choose $a_0=1$, $c=0.2$ and $d=0.1,~0.2,~0.3$ and $0.4$, using the same example studied in Beran and Sch\"utzner (2009) for its numerical illustrations. To define numerically the trajectory of such processes, whose theoretical definition involves an infinite sum, a truncation of this infinite sum has been used by taking a sum of one million terms in the computation of $M_\theta(t)$, which also requires the generation of one million additional realizations of the white noise.   
\end{enumerate}
\item Several trajectory lengths: $n=200,~500,~1000, ~2000$ and $5000$ for LARCH$(2)$ and GLARCH$(1,1)$ processes, and $n=1000,~2500,~5000$ and $10000$ for the LARCH$(\infty)$ process (as in Beran and Sch\"utzner, 2009);
\item Two distributions for $\xi_0$ such as $\E[|\xi_0|]=1$: a Gaussian ${\cal N}(0,\pi/2)$ distribution denoted ${\cal N}$ and a normalized Student $t(6)$ distribution with $6$ freedom degrees.
\item Choice of $\Theta$ for minimisation \eqref{thetaLAV} and calculation of $\widehat \theta_n$: as already mentioned in the Remark \ref{Rem00}, the set $\Theta(2)$ depends on the distribution of the noise, and an "extended" condition will be used in the minimisation algorithm. Hence, for  the LARCH$(2)$ process this means $0\leq a_1,\,a_2\leq 1$, for the GLARCH$(1,1)$ process it will be $0\leq c_1,\,d_1\leq 1$ and for the long memory LARCH$(\infty)$  process, $c<1$ and $0\leq d \leq 0.5$. After computing the estimator, and using the empirical variance of the residuals $\widehat \xi_t$ as an estimator of $\|\xi_0\|_2^2$, the condition $\widehat \theta_n \in \Theta(2)$ can be checked from the equation $\widehat \sigma^2_{\widehat \xi_t}\, \sum_{j=1}^\infty a^2_j(\widehat \theta_n)<1$, which means for example that $\displaystyle \widehat \sigma^2_{\widehat \xi_t}\,\widehat c^2 \, \zeta(2-2\, \widehat d)<1 $ for the long memory LARCH$(\infty)$  process, where $\zeta$ is the Riemann zeta function.  
\end{itemize}
For each choice of process, length $n$ and noise distribution, $1000$ replications of independent trajectories of the LARCH process are generated, except for the long memory LARCH$(\infty)$ where only $300$ replications are used due to the computational time of the estimator. Note that in this case, an algorithm based on the Fast Fourier Transform (FFT) has been developed in Nielsen and No\"el (2021, section 2.2), which could have been used to speed up the computations significantly, or if we had applied the estimators to series larger than $10^4$.  \\
~\\
Two other estimators are to be compared with $\widehat \theta_n$:
\begin{enumerate}
\item Following Beran and Sch\"utzner (2009) and Truquet (2014), the first is the smooth approximation of the QMLE, which for $h > 0$ is given by
\begin{equation}\label{sQMLE}
\widehat \theta_{QML}(h):=\mbox{Arg}\! \min_{\! \! \! \!\! \! \theta \in \Theta} \, \frac1 n \, \sum_{t=1}^n \frac {h+X_t^2}{h+(M_\theta(t))^2} +\log \big (h+(M_\theta(t))^2\big ).
\end{equation} 
The a priori choice of $h$ or data-driven $\widehat h$ is not a straightforward task, although Truquet has provided some guidance in Truquet (2014). Therefore, we will present the results obtained for $2$ different values of $h$ that give the best performances. In the case of the considered LARCH$(\infty)$ process with long memory, Beran and Sch\"utzner (2009) proposed a modified version of $\widehat \theta_{QML}(h)$ and we will use their results.
\item Following Francq and Zako\"ian (2010), the second is the weighted least squares estimator defined by:
\begin{equation}\label{WLSE}
\widehat \theta_{FZ}:=\mbox{Arg}\! \min_{\! \! \! \!\! \! \theta \in \Theta} \,  \frac1 n \,  \sum_{t=1}^n \tau_t \, \big (X_t^2-(M_\theta(t))^2 \big )^2,
\end{equation} 
where the weights $(\tau_t)$ are obtained for LARCH$(p)$ or GLARCH$(p,q)$ using an empirical rule proposed in Ling (2007): 
$\displaystyle \tau_t=\Big (\max \big ( 1 \, , \, \frac 1 C \, \sum_{i=1}^p |X_{t-i}| \, \1_{|X_{t-i}|>C}  \big )\Big )^{-4},
$
where $C$ is computed as the 90\% quantile of the absolute values $\big (|X_1|,\ldots,|X_n| \big )$. In case of long memory LARCH$(\infty)$, we replace  $p$ by $t-1$ in the definition of $\tau_t$.  
\end{enumerate}
\begin{remark}
The estimator $\widehat \theta_n$ can also be defined as a nonlinear least squares estimator for the regression $(|X_t|)_{1\leq t\leq n}$ on $\Big ( \Big |
a_0(\theta) + \sum_{j=1}^\infty a_j(\theta)\, X_{t-j} \Big |\Big )_{1\leq t\leq n}$.  
As it was already done by Francq and Zako\"ian (2010) (see the estimator defined in \eqref{WLSE}), a weighted version of $\widehat \theta_n$ could also be considered:
$$
\widehat \theta^w_n=\mbox{Arg}\min_{\! \! \! \!\! \! \theta \in \Theta} \, \frac1 n \, \sum_{t=1}^n w_t \, \Big ( |X_t|-\Big |
a_0(\theta) + \sum_{j=1}^{\infty}  a_j(\theta)\, \widetilde X_{t-j} \Big | \Big )^2,
$$
where $w_t=W\big ((X_{t-k})_{1\leq k\leq t-1}\big )>0$, defining a sequence of selected weights. Inspired by their work, it seems that using weights of the form $w_t=\Big (\max \big ( 1 \, , \, \frac 1 C \, \sum_{i=1}^p |X_{t-i}| \, \1_{|X_{t-i}|> C} \big )^{-2}$ would allow, in particular, remove the condition $r= 4$ from the assumptions of asymptotic normality of $\widehat \theta_n$ by requiring only $r=1$ for $\widehat \theta^w_n$. To improve the convergence rate of the $\widehat \theta_n$, taking into account Remark 4.2 of Francq and Zako\"ian (2010), one might think that $w_t=\big (M_{\theta^*}(t)\big )^{-2}$ would be the ideal sequence of weights, but infeasible in practice. 
\end{remark}
\begin{remark}
The estimator $\widehat \theta_n$ is obtained with parameters defined under the normalization condition $\|\xi_0\|_1=1$, while the estimators $\widehat \theta_{QML}(h)$ or $\widehat \theta_{ FZ }$ are basically defined under the normalization condition $\|\xi_0\|_2=1$. After remark \ref{Rem1}, a suitable renormalization of the parameters of the LARCH process allows the transition from one condition to the other. So, for reference, consider the LARCH$(p)$ equation with parameter $\theta={}^t(a_0,\ldots,a_p)$ under condition $\|\xi_0\|_1=1$, $\widehat \theta_n$ is an estimator of $\theta$, but $\widehat \theta_{QML}(h)$ or $\widehat \theta_{ FZ }$ are estimators of $\theta /\|\xi_0\|_2$. Therefore, a comparison of the accuracies of the estimators is possible by considering $\widehat \theta_n$, $\|\xi_0\|_2 \,\widehat \theta_{QML}(h)$ and $\|\xi_0\|_2 \, \widehat \theta_{ FZ }$. If the law of $\xi_0$ is unknown, the comparison with the estimator $\widehat \sigma_\xi$ defined in \eqref{CLT2} is still possible.
\end{remark} 
The results are presented in tables \ref{Table1}, \ref{Table2} and \ref{Table3}. \\
~\\
\begin{table*}
\centering
\begin{tabular}{l|l||c|c|c||c|c|c||c|c|c||c|c|c|}
& &  \multicolumn{3}{c|}{$\widehat \theta_n$} & \multicolumn{3}{c|}{$\widehat \theta_{FZ}$} & \multicolumn{3}{c|}{$\widehat \theta_{QML}(2)$} &\multicolumn{3}{c|}{$\widehat \theta_{QML}(1)$}  \\
\hline 
$\xi_0$ law & $n$ & $a_0$ &$a_1$ &$a_2$ & $a_0$ &$a_1$ &$a_2$& $a_0$ &$a_1$ &$a_2$& $a_0$ &$a_1$ &$a_2$  \\ 
\hline
${\cal N}$ & $200$ & 0.326 &0.047 &0.064 &0.423 &0.092 &0.100 &1.015 &0.136 &0.123 &1.832 &0.248 &0.168\\
&$500$ & 0.210 &0.029 &0.043 &0.268 &0.059 &0.065 &0.446 &0.070 &0.086 &1.119 &0.145 &0.121\\
&$1000$ &0.145 &0.021 &0.030 &0.188 &0.044 &0.047 &0.382 &0.060 &0.082 &0.582 &0.075 &0.084\\
&$2000$ & 0.101 &0.014 &0.021 &0.130 &0.030 &0.033 &0.265 &0.047 &0.059 &0.453 &0.053 &0.080\\
&$5000$ & 0.065 & 0.009 &0.013 &0.083 &0.019 &0.021 &0.205 &0.031 &0.048 &0.326 &0.037 &0.061\\
\hline
$t(6)$ & $200$ & 0.433 &0.061 &0.091& 0.624 &0.130& 0.133&2.111 &0.268 &0.205 &1.916& 0.258 &0.186\\
&$500$ & 0.272 &0.040 &0.061 &0.390& 0.091& 0.094&1.699 & 0.214& 0.186& 1.569& 0.209&0.167\\
&$1000$ & 0.224 &0.029 &0.051 &0.275& 0.067& 0.071 &1.361&0.170& 0.191& 1.270& 0.166& 0.155\\
&$2000$ & 0.124 &0.021 &0.031 &0.196& 0.048& 0.051  &1.334& 0.149& 0.175& 1.535& 0.147 &0.151\\
&$5000$ & 0.077 & 0.014 &0.021 &0.127& 0.031& 0.033 &1.230& 0.136& 0.211& 1.533& 0.147 &0.188\\
\hline
\end{tabular}
\caption{Square roots of the MSE computed for each  estimator of parameters $a_0=5$, $a_1=-0.2$ and $a_2=0.4$ of a LARCH$(2)$ process computed from $1000$ independent replications. }
\label{Table1}
\end{table*}
\begin{table*}
\centering
\begin{tabular}{l|l||c|c|c||c|c|c||c|c|c||c|c|c|}
& &  \multicolumn{3}{c|}{$\widehat \theta_n$} & \multicolumn{3}{c|}{$\widehat \theta_{FZ}$} & \multicolumn{3}{c|}{$\widehat \theta_{QML}(1)$} &\multicolumn{3}{c|}{$\widehat \theta_{QML}(0.5)$}  \\
\hline 
$\xi_0$ law & $n$ & $c_0$ &$c_1$ &$b_1$ & $c_0$ &$c_1$ &$b_1$& $c_0$ &$c_1$ &$b_1$& $c_0$ &$c_1$ &$b_1$ \\ 
\hline
${\cal N}$ & $200$ & 0.172 &0.044 &0.096 &0.238 &0.094 &0.135 &0.179 &0.045 &0.099 &0.190 &0.052 &0.105 \\
&$500$ &0.108 &0.028 &0.057 &0.158 &0.068 &0.081 &0.102 &0.031 &0.055 &0.114 &0.040 &0.061 \\
&$1000$ & 0.071& 0.019 &0.039& 0.113 &0.050& 0.055& 0.066& 0.018& 0.034 &0.081& 0.029 &0.048 \\
&$2000$ & 0.052 & 0.013& 0.028& 0.087 &0.040 &0.039& 0.045 &0.012& 0.023& 0.050& 0.019& 0.024 \\
&$5000$ & 0.033 &0.008 &0.017 &0.065 &0.030 &0.025 &0.028 &0.007 &0.014 &0.027 &0.006 &0.012\\
\hline
$t(6)$ & $200$ & 0.233& 0.061 &0.145 &0.367 &0.113& 0.232& 0.335 &0.085 &0.191& 0.349& 0.094 &0.181  \\
&$500$ & 0.142 &0.042 &0.081 &0.196 &0.077 &0.116 &0.255 &0.072 &0.156& 0.275 &0.095& 0.156\\
&$1000$ & 0.091& 0.029 &0.051 &0.147 & 0.063 &0.082& 0.207 &0.064& 0.125& 0.261& 0.096& 0.144\\
&$2000$ & 0.064  & 0.020& 0.033& 0.096& 0.043& 0.052& 0.153 &0.059& 0.097& 0.214& 0.092& 0.119 \\
&$5000$ & 0.039& 0.013 &0.022 &0.071& 0.029 &0.037& 0.116& 0.036 &0.095& 0.184& 0.073& 0.119\\
\hline
\end{tabular}
\caption{Square roots of the MSE computed for each  estimator of parameters $c_0=2$, $c_1=0.3$ and $d_1=-0.6$ of a GLARCH$(1,1)$ process computed from $1000$ independent replications. }
\label{Table2}
\end{table*}
\begin{center}
\begin{table*}
\begin{center}
\begin{tabular}{|l|l||c|c|c||c|c|c||c|}
& &  \multicolumn{3}{c||}{$\widehat \theta_n$} & \multicolumn{3}{c||}{$\widehat \theta_{FZ}$} & \multicolumn{1}{c||}{$\widehat d_{QML}$} \\
\hline
$d$& $n$ & $a_0$ & $c $ & $d$ & $a_0$ & $c $ & $d$ & $d$ \\ \hline
$d=0.1$  &$1000$& 0.035& 0.024& 0.089 &0.092& 0.054 &0.160 & 0.357\\
&$2500$&0.020 &0.015 &0.048 &0.089 &0.059 &0.119& 0.292\\
&$5000$& 0.016 &0.010 &0.036& 0.040& 0.031 &0.084&0.217 \\
&$10000$& 0.013 &0.010 &0.021& 0.033 &0.029 &0.053& 0.198\\
$d=0.2$ &$1000$&0.041 &0.023& 0.060& 0.103& 0.059& 0.147 &1.449 \\
&$2500 $& 0.028& 0.017& 0.043 &0.052& 0.033 &0.088&0.733 \\
&$5000$& 0.016 &0.010 &0.024 &0.033 &0.027& 0.050& 0.559\\
&$10000$& 0.014 &0.008& 0.017& 0.032 &0.024& 0.045&  0.257\\
$d=0.3$ &$1000$&0.060& 0.024& 0.053 &0.119 &0.054 &0.110 &- \\
&$2500 $& 0.045 &0.017& 0.037& 0.091 &0.035 &0.087&-\\
&$5000$& 0.026 &0.011& 0.022& 0.057 &0.029& 0.050&-\\
&$10000$& 0.016 &0.008& 0.017& 0.039 &0.024 &0.042& -\\
$d=0.4$ &$1000$& 0.100 &0.025& 0.047 &0.189 &0.051 &0.081&- \\
&$2500 $& 0.071 &0.017 &0.030 &0.158 &0.041 &0.074&-\\
&$5000$& 0.041 &0.011& 0.018 &0.122& 0.032 &0.057&-\\
&$10000$& 0.034 &0.009& 0.015& 0.067& 0.024& 0.036& -\\
\hline
\end{tabular}
\end{center}
\caption{Square roots of the MSE computed for estimators $\widehat \theta_n$ and $\widehat \theta_{FZ}$ of parameters $a_0=1$, $c=0.2$ and $d=0.1,\,0.2,\,0.3$ and $0.4$ of the LARCH$(\infty)$ process computed from $300$ independent replications, and for $\widehat d_{QML}(h)$ already computed in Beran and Sch\"utzner (2009) for $d=0.1$ and $0.2$. }
\label{Table3}
\end{table*}
\end{center}
~\\
\noindent{\bf Conclusions of the Monte Carlo experiments:} 
\begin{itemize}
\item Looking at the decay of the square root of the MSE of the $\widehat \theta_n$ components towards 0 as $n$ increases from 200 to 5000 (or from 1000 to 10000), we see that this decay is approximately $1/\sqrt n$, which corresponds to the theoretical convergence rate established in Theorem \ref{ANLARCH}. This is true for a LARCH$(2)$ process as well as for a GLARCH$(1,1)$ or a LARCH$(\infty)$ process with long memory, regardless of whether we consider a white noise with a Gaussian or Student distribution $t(6)$.
\item In general, the square root of the MSE of $\widehat \theta_n$ converges to $0$ twice as fast as that of $\widehat \theta_{ FZ }$ when the white noise follows a Gaussian or Student $t(6)$ distribution, for the three processes considered. We note that $\widehat \theta_{ FZ }$ gives convincing results in the case of a LARCH$(\infty)$ process with long memory, while this was not shown in Francq and Zako\"ian (2010), although they are far outperformed by those of $\widehat \theta_n$.
However, the numerical results for the convergence of the MSE of $\widehat \theta_{ FZ }$ towards $0$ become much worse when the white noise distribution of the LARCH processes is a Student $t(6)$ distribution.

\item Finally, $\widehat \theta_{QML}$ gives satisfactory performances comparable to those of $\widehat \theta_n$ only in one case, namely for the LARCH$(1,1)$ process with Gaussian white noise, and this after choosing an optimal regularisation parameter. It should be noted, however, that the choice of this parameter can easily be made in the context of Monte Carlo experiments, but this would otherwise require a data-driven procedure that does not currently exist. For the estimation of the long memory parameter $d$ in the case of a LARCH$(\infty)$ process, the QML estimator proposed in Beran and Sch\"utzner (2009) has truly disastrous performances compared to those obtained with $\widehat \theta_{ FZ }$ and in particular those of $\widehat \theta_n$.
\end{itemize}
\subsection{Application on the returns of a financial index}
Let us look at the financial returns of an index called the CAC40, which is the French equivalent of the FTSE100, with 40 companies instead of 100. These are the closing values between 15 November 2002 and 15 November 2022, i.e. 20 years and $n=5122$ data. The correlogram plot shows a weak white-noise type behaviour, which is also consistent with a conditionally heteroscedastic process. 
\begin{figure}[h]
  \hspace*{0cm}
	\centering
  \includegraphics[height=6cm,width=17cm]{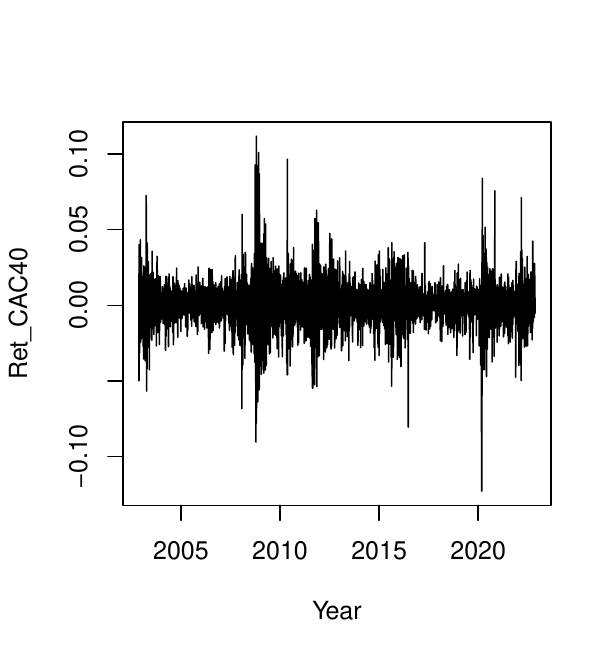}
	\caption{Financial returns of CAC40 index, between November 15, 2002 and November 15, 2022\label{Fig1}} 
\end{figure}
Firstly, we can see that these returns have values extremely close to 0 (70 are in absolute terms less than $1 \%$ of the standard deviation) and even twice exactly 0, which would be very unlikely with a GARCH process often used to model this data, for which the conditional variance is always greater than a positive constant. \\
Moreover, as already pointed out in Giraitis {\it i.e.} (2004) with the S\&P500 returns data since 1928, we can observe a particular behaviour of the leverage estimate: it is negative for almost all lags and follows a power-law type distribution (see Figure \ref{Fig2}). A non-linear least squares approximation gives the value of this power $\simeq -0.55$. If we note $h_t$ the leverage, this would mean that $h_t \sim C\, t^{d-1}$ with $d\simeq 0.45$ and $C<0$. 
\begin{figure}[h]
  \hspace*{0cm}
	\centering
  \includegraphics[height=6cm,width=17cm]{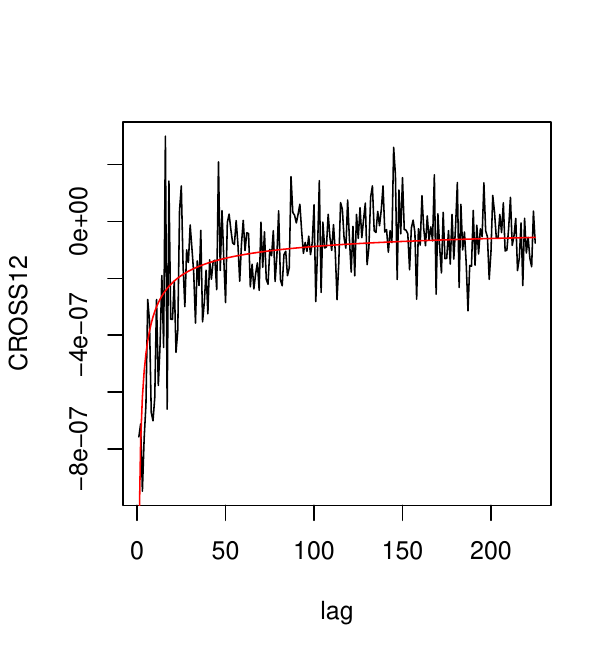}
	\caption{Leverage estimate of the financial returns of CAC40 index, between November 15, 2002 and November 15, 2022\label{Fig2}} 
\end{figure}
Now, if the CAC40 returns follow the example of LARCH$(\infty)$ processes with long memory $(X_t)$ studied in Beran and Sch\"utzner (2009), {\it i.e.} $a_j(\theta)=c\, j^{d-1}$ for $j\geq 1$, then we have:
$$
\cov(X_0,X_t)=0, \quad h_t=\cov(X_0,X_t^2)\simeq C\, t^{d-1} \quad \mbox{and}\quad \cov(X_0^2,X^2_t)\simeq c'\, t^{2d-1},
$$
with $0<d<1/2$, $C<0$ and $c'>0$ (see again Giraitis {\it i.e.}, 2004, or Robinson and Zaffaroni, 1997). We check that this last asymptotic behaviour is well verified by plotting the correlogram of the squares of the process, which is done in figure \ref{Fig3}. Again we observe a power law type behaviour, and a non-linear least squares estimation of this power gives the result $\simeq -0.09$. However, if we use the value of $d=0.445$ obtained numerically from $h_t$, we find that $2d-1\simeq -0.11$, a value very close to $-0.09$. There seems to be a good fit of a long memory LARCH$(\infty)$ model with these return data. 
\begin{figure}[h]
  \hspace*{0cm}
	\centering
  \includegraphics[height=6cm,width=17cm]{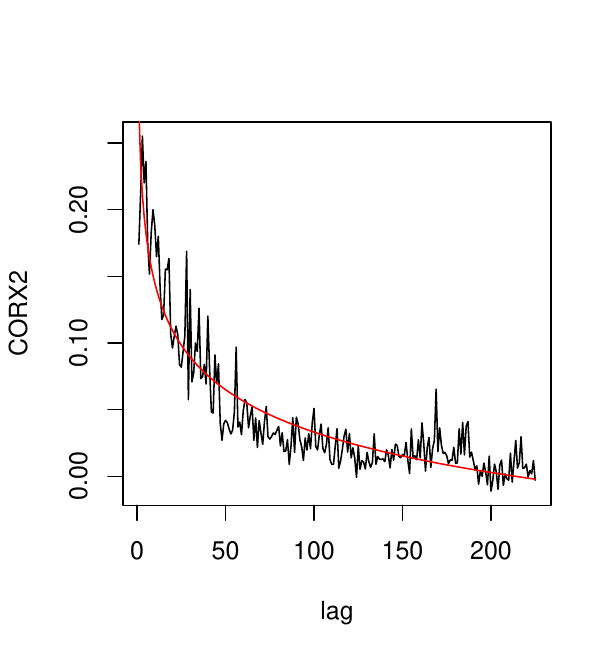}
	\caption{Correlogram of the squared financial returns of CAC40 index, between November 15, 2002 and November 15, 2022\label{Fig3}} 
\end{figure}
~\\
{\bf Fitting the returns of CAC40 index with two models of long memory LARCH$(\infty)$} \\
~\\
{\bf a.} We first fitted these data with  the long memory  LARCH$(\infty)$ process studied in Beran and Sch\"utzner (2009), {\it i.e.}, such that 
$$
a_0(\theta)=a_0\quad\mbox{and}\quad a_j(\theta)=c\, j^{d-1}\quad\mbox{for}~ j\geq 1.
$$
For this model, we considered the  estimator $\widehat \theta_n={}^t\big (\widehat a_0,\widehat c,\widehat d \big )$ used in the results of Table 3 and we obtain 
$$
\widehat a_0\simeq 0.010\quad,~\widehat c \simeq -0.159\quad\mbox{and}\quad \widehat d\simeq 0.488.
$$ 
This is quite consistent with the values of $d$ previously obtained by nonlinear least squares. Note that we also obtained $\widehat \theta_{ FZ }={}^t\big (0.007, -0.143,0.497\big )$, a value quite close to that of $\widehat \theta_n$, confirming the long memory property of this time series. Note also that using \eqref{CLT2} an estimate of the covariance matrix of these estimators can be computed. And we obtained the following $95\%$ confidence intervals from $\widehat \theta_n$: 
$$
a_0 \in \big [0.00999\, , \,0.0110 \big ], \quad c \in \big [-0.192\, , \, -0.125\big ]\quad \mbox{and}\quad d \in \big [0.433\, , \, 0.544\big ].
$$\\
However, we notice that $\sum_{j=1}^\infty a^2_j(\widehat \theta_n)\simeq (0.159)^2 \, \zeta(1.024) \simeq 1. 068>1$ and also $\sum_{j=1}^\infty a^2_j(\widehat \theta_{ FZ })\simeq (-0.143)^2 \, \zeta(1.006)>1$. Thus, whatever the distribution of the noise, the second order stationarity condition is not verified. The long memory model LARCH$(\infty)$ process such as $a_j(\theta)=c\, j^{d-1}$ for $j\geq 1$ does not seem appropriate. This is also confirmed by the correlograms of the residuals $\widehat \xi_t=X_t/(\widehat a_0+\widehat c \, \sum_{j=1}^{t-1} j^{\widehat d -1} X_{t-j})$ and their absolute values $|\widehat \xi_t |$ plotted in Figure \ref{Fig4}. Indeed, and it is particularly clear with the correlogram of $|\widehat \xi_t |$, $(\widehat \xi_t)$ could not appear almost as white noise (note that Francq and Zako\"ian, 2010, also proposed a goodness-of-fit test based on $(\widehat \xi_t^2)$, but only for AR$(q)$-LARCH$(p)$ processes). Modelling these data with this model is therefore problematic. 
\begin{figure}[h]
  \hspace*{0cm}
	\centering
  \includegraphics[height=6cm,width=8cm]{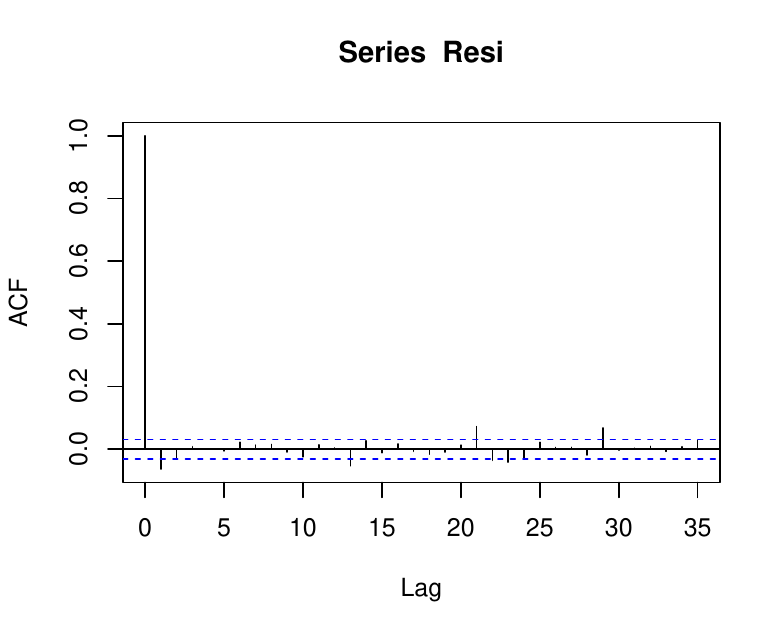} \includegraphics[height=6cm,width=8cm]{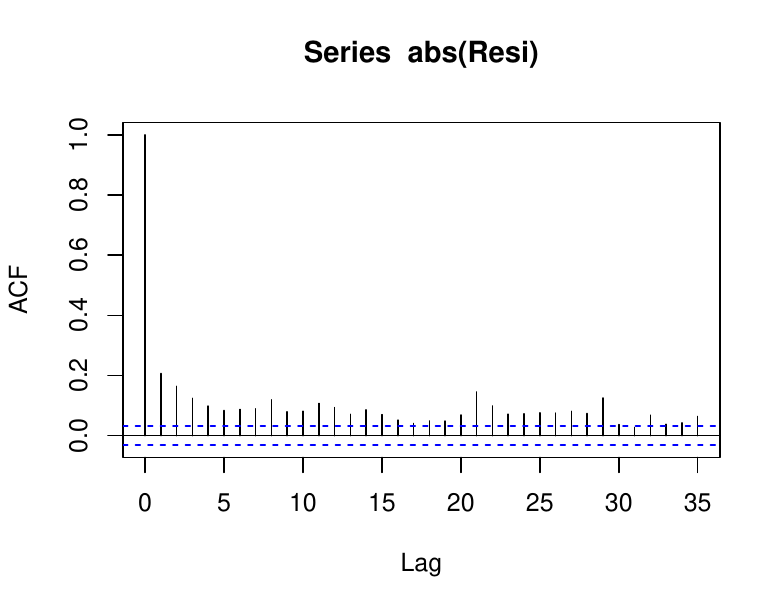}
	\caption{Correlograms of residuals and absolute values of residuals for the returns of CAC40 index, between November 15, 2002 and November 15, 2022, using $a_j(\theta)=c\, j^{d-1}$ for $j\geq 1$\label{Fig4}} 
\end{figure}
~\\
{\bf b.} We now consider another model of a long memory LARCH$(\infty)$ process. We will thus suppose that 
$$
a_0(\theta)=a_0\quad\mbox{and}\quad a_j(\theta)=c\,j^{d-1}\big (1 +\frac {c'}j \big ) \, \quad\mbox{for}~ j\geq 1,
$$
with $\theta=(a_0,c,d,c') \in (0,\infty)\times \R \times (0,1/2)\times \R$. It is clear that for $c'=0$ we recover the previous model. For this model, we computed on data the  estimator $\widehat \theta_n={}^t\big (\widehat a_0,\widehat c,\widehat d,\widehat c' \big )$  and obtained
$$
\widehat a_0\simeq 0.010\quad,~\widehat c \simeq -0.302, ~\widehat d \simeq 0.346\quad\mbox{and}\quad \widehat c'\simeq -0.678.
$$ 
In contrast to the previous model, now $\sum_{j=1}^\infty a^2_j(\widehat \theta_n)\simeq (-0.302)^2 \, \big (\zeta(1.308)+(0.678)^2\, \zeta(3.308)-1.356 \,\zeta(2.308) \big ) \simeq 0.222<1$, and the second order stationary condition is generally satisfied (except when $\|\xi_0\|_2 \geq 4.5$). And using \eqref{CLT2}, an estimate of the covariance matrix of these estimators can be computed, and the following $95\%$ confidence intervals of the parameters are obtained: 
$$
a_0 \in \big [0.0099\, , \,0.0110 \big ], \quad c \in \big [-0.383\, , \, -0.221\big ],\quad d \in \big [0.275\, , \, 0.416\big ]\quad \mbox{and}\quad c' \in \big [-0.885\, , \, -0.472\big ].
$$
Finally, to visualise the goodness-of-fit of this second long memory LARCH$(\infty)$ model, the correlograms of the residuals and the absolute values of the residuals are plotted in Figure \ref{Fig5}.\\
\begin{figure}[h]
  \hspace*{0cm}
	\centering
  \includegraphics[height=6cm,width=8cm]{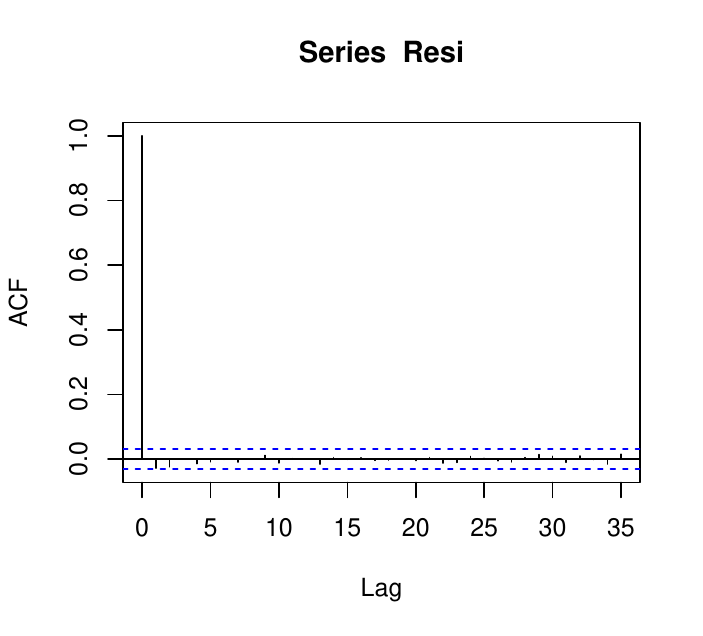} \includegraphics[height=6cm,width=8cm]{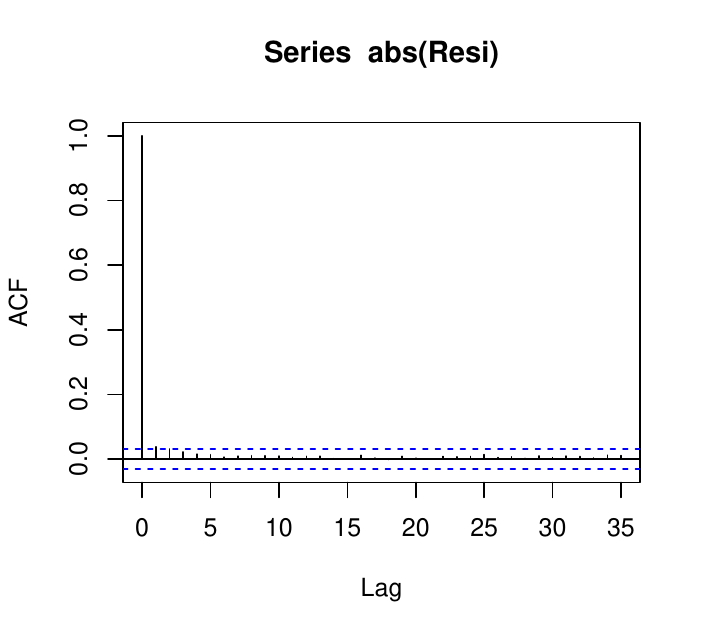}
	\caption{Correlograms of residuals and absolute values of residuals for the returns of CAC40 index, between November 15, 2002 and November 15, 2022, using $a_j(\theta)=c\,j^{d-1}\big (1 +\frac {c'}j \big )$ for $j\geq 1$\label{Fig5}} 
\end{figure}
~\\
{\bf Conclusion:} For these financial data, as observed by other authors, modelling by a long memory LARCH$(\infty)$ process seems relevant. However, we point out that the model studied by Beran and Sch\"utzner (2009), {\it i.e.} $a_j(\theta)=c\, j^{d-1}$ for $j\geq 1$, is not appropriate for these data, while a model with a slightly more complex behaviour with an additional parameter, {\it i.e.} $a_j(\theta)=c\,j^{d-1}\big (1 +\frac {c'}j \big )$ for $j\geq 1$, provides a quite satisfactory fit.   
\section{Proofs} \label{Proofs}
In the sequel, with $\sum_{j=1}^{m}\cdot =0$ for $m\leq 0$ by convention, for $t \in \Z$, we will many times consider 
\begin{equation}\label{Mt}
M_\theta(t):=a_0(\theta)+\sum_{j=1}^\infty a_j(\theta) \, X_{t-j}\quad\mbox{and}\quad \widetilde M_\theta(t):=a_0(\theta)+\sum_{j=1}^{t-1} a_j(\theta) \, X_{t-j}.
\end{equation}
\begin{lem}\label{LemM}
Under the assumptions of Proposition \ref{consLARCH} and with $\overline d<1/2$ defined in \eqref{CondAN}, we obtain:
\begin{enumerate}
\item There exists $\widetilde C>0$, such as for any $t\geq 1$ and $\theta \in \Theta$:
\begin{equation}\label{borneMM}
\E \Big [\big (M_\theta(t)\big )^2 \Big ] <\infty,\quad \E \Big [\big (\widetilde M_\theta(t)\big )^2 \Big ] <\infty \quad \mbox{and} \quad \E \Big [ \big (M_\theta(t)- \widetilde M_\theta(t)\big )^2 \Big ]  \leq \widetilde C \, t^{2\overline d -1}.
\end{equation}
\item There exists $\widetilde C>0$, such as for any $t\geq 1$ and $\theta \in \Theta$:
\begin{equation}\label{bornedMM}
\E \Big [\big \|\partial _\theta M_\theta(t)\big \|^2 \Big ] <\infty,\quad \E \Big [\big \| \partial _\theta \widetilde M_\theta(t)\big \|^2 \Big ] <\infty \quad \mbox{and} \quad \E \Big [\big \| \partial _\theta  M_\theta(t)- \partial _\theta  \widetilde M_\theta(t)\big \|^2  \Big ]  \leq \widetilde C \, t^{2\overline d -1}.
\end{equation}
\item There exists $\overline C>0$, such as for any $t\geq 1$,
\begin{equation} \label{borneM}
\E \Big [\sup_{\theta \in \Theta} \big (M_\theta(t)- \widetilde M_\theta(t)\big )^2 \Big ] \leq 
\overline C \, t^{2\overline d -1}.
\end{equation}
\end{enumerate}
\end{lem}
\begin{proof}[Proof of Lemma \ref{LemM}]
1. Since $(X_t)_{t\in \Z}$ is a weak white noise, it can be written directly that
$$
\E \Big [\big (M_\theta(t)\big )^2 \Big ] =\E [X_0^2] \, \sum_{k=1}^\infty a_k^2(\theta) \leq \E [X_0^2] \, \sum_{k=1}^\infty \sup _{\theta \in \Theta} a_k^2(\theta).
$$
From Assumption $C_\ell(\Theta)$, $\sup _{\theta \in \Theta} \big |a_k(\theta)\big | \leq C_a \, k^{\overline d-1} $ implying 
$$
\sum_{k=1}^\infty \sup _{\theta \in \Theta} a_k^2(\theta) \leq C^2_a \, \sum_{k=1}^\infty k^{2\overline d-2}<\infty,
$$
since $2\overline d-2<-1$. And therefore $\E \Big [\big (M_\theta(t)\big )^2 \Big ] <\infty$. \\
Using the same reasoning, 
$$\E \Big [\big (\widetilde M_\theta(t)\big )^2 \Big ]= \E [X_0^2] \, \sum_{k=1}^{t-1} a_k^2(\theta) \leq \E \Big [\big (M_\theta(t)\big )^2 \Big ] <\infty.
$$
Finally, $\E \Big [ \big (M_\theta(t)- \widetilde M_\theta(t)\big )^2 \Big ]= \E [X_0^2] \, \sum_{k=t}^{\infty} a_k^2(\theta)\leq C^2_a \, \E [X_0^2] \, \sum_{k=t}^{\infty} k^{2\overline d-2} $ always from Assumption $C_\ell(\Theta)$. But for $f$ a positive decreasing function, $\sum_{k=n_0}^{n_1} f(k)\leq f(n_0)+\int_{n_0}^{n_1}f(t)dt$ for any integer numbers $n_0$ and $n_1$ such as $0\leq n_0<n_1\leq \infty$. Therefore, for any $t\geq 1$,
$$
\sum_{k=t}^{\infty} k^{2\overline d-2} \leq t^{2\overline d-2}+\int_t^\infty x^{2\overline d-2}dx\leq t^{2\overline d-2}+\frac 1{1-2\overline d} \, t^{2\overline d-1}\leq \Big (\frac {2-2\overline d}{1-2\overline d}\Big )\, t^{2\overline d-1}.
$$ 
Therefore, for any $\theta \in \Theta$ and $t\geq 1$, 
$$
\E \Big [ \big (M_\theta(t)- \widetilde M_\theta(t)\big )^2 \Big ] \leq C^2_a \, \E [X_0^2] \,\Big (\frac {2-2\overline d}{1-2\overline d}\Big )\, t^{2\overline d-1} \leq \widetilde  C \,t^{2\overline d-1}. 
$$
2. Using Assumptions (S) and  $C_\ell(\Theta)$, $\theta \in \Theta \mapsto \partial _\theta M_\theta(t)$ exists a.s., and for any $\theta \in \Theta$ and $t\geq 1$,
\begin{equation}\label{dM}
\partial _\theta M_\theta(t)=\partial _\theta a_0(\theta)+ \sum_{k=1}^\infty \partial _\theta a_k(\theta) X_{t-k}\quad\mbox{and} \quad  \partial _\theta \widetilde M_\theta(t)=\partial _\theta a_0(\theta)+ \sum_{k=1}^{t-1} \partial _\theta a_k(\theta) X_{t-k}~a.s.
\end{equation}
Then $\E \Big [\big (\partial _\theta M_\theta(t)\big )^2 \Big ]=\sum_{j=1}^\ell \E \Big [\big (\partial _{\theta_j} M_\theta(t)\big )^2 \Big ]$. 
By replacing $a_k(\theta)$ by $\partial _{\theta_j} a_k(\theta)$ that also satisfies \eqref{CondAN} for $j=1,\ldots,\ell$, we can use the same reasoning as in 1. for obtaining $\E \Big [\big (\partial _{\theta_j} M_\theta(t)\big )^2 \Big ]<\infty$. And this can also be done for $\partial_\theta \widetilde M_\theta(t)$ and $\partial_\theta M_\theta(t)-\partial_\theta \widetilde M_\theta(t)$ and we obtain \eqref{bornedMM}. \\
~\\
3. We use here ideas already present in Lemmas 1 and 2 of Beran and Sch\"utzner (2009). The result will be easily generalized, but to facilitate the writing of the proof, we restrict ourselves to the case where $\ell=2$ and $\theta=(\theta_1,\theta_2) \in \Theta \subset [\underline{\theta}_1,\overline{\theta}_1]\times [\underline{\theta}_2,\overline{\theta}_2] $. Denote $Z_\theta=M_\theta(t)- \widetilde M_\theta(t)$.  Using Theorem 3.B of Parzen (1999), we can write that for any $t\geq 1$ and $(\theta_1,\theta_2) \in  [\underline{\theta}_1,\overline{\theta}_1]\times [\underline{\theta}_2,\overline{\theta}_2] $,
$$
Z^2_{(\theta_1,\theta_2)}\leq \frac 1 2 \Big (Z^2_{(\underline{\theta}_1,\theta_2)} +Z^2_{(\overline{\theta}_1,\theta_2)} + \int_{\underline{\theta}_1}^{\overline{\theta}_1} Z^2_{(u_1,\theta_2)}+ \big ( \partial _{\theta_1} Z_{(u_1,\theta_2)} \big )^2\, du_1 \Big ).
$$
By applying again this theorem, and after computations, we finally obtain for any $(\theta_1,\theta_2)\in [\underline{\theta}_1,\overline{\theta}_1]\times [\underline{\theta}_2,\overline{\theta}_2]$,
\begin{multline*}
4\,Z^2_{(\theta_1,\theta_2)}\leq Z^2_{(\underline \theta_1,\underline \theta_2)} +Z^2_{(\underline \theta_1,\overline \theta_2)} + Z^2_{(\overline \theta_1,\underline \theta_2)} +Z^2_{(\overline \theta_1,\overline \theta_2)}\\
+ \int_{\underline{\theta}_2}^{\overline{\theta}_2} \Big ( Z^2_{(\underline \theta_1,u_2)} +Z^2_{(\overline \theta_1,u_2)}+ \big ( \partial _{\theta_2} Z_{(\underline \theta_1,u_2)} \big )^2+ \big ( \partial _{\theta_2} Z_{(\overline \theta_1,u_2)} \big )^2 \Big )\, du_2 \\
+\int_{\underline{\theta}_1}^{\overline{\theta}_1}\Big ( Z^2_{(u_1,\underline \theta_2)}+Z^2_{(u_1,\overline \theta_2)} +\big ( \partial _{\theta_1} Z_{(u_1,\underline \theta_2)} \big )^2 +\big ( \partial _{\theta_1} Z_{(u_1,\overline \theta_2)} \big )^2   \Big )\, du_1 \\
+\int_{\underline{\theta}_1}^{\overline{\theta}_1}\int_{\underline{\theta}_2}^{\overline{\theta}_2}\Big (Z^2_{(u_1,u_2)} + \big ( \partial _{\theta_1} Z_{(u_1,u_2)} \big )^2 +\big ( \partial _{\theta_2} Z_{(u_1,u_2)} \big )^2+\big ( \partial^2 _{\theta_1\theta_2} Z_{(u_1,u_2)} \big )^2  \Big )\, du_1du_2.
\end{multline*}
After taking expectations, we finally obtain that there exist positive real numbers $C_0(\Theta)$, $C_1(\Theta)$, $C_2(\Theta)$ and $C_{12}(\Theta)$ depending only on $\Theta$ such as
\begin{multline}\label{Parzen}
\E \big [\sup_{(\theta_1,\theta_2)\in \Theta}Z^2_{(\theta_1,\theta_2)}  \big ] \leq C_0(\Theta) \,\sup_{(\theta_1,\theta_2)\in \Theta} \E \big [ Z^2_{(\theta_1,\theta_2)}  \big ] + C_1(\Theta) \,\sup_{(\theta_1,\theta_2)\in \Theta} \E \big [ \big (\partial _{\theta_1} Z_{(\theta_1,\theta_2)} \big )^2 \big ] \\
+ C_2(\Theta) \,\sup_{(\theta_1,\theta_2)\in \Theta} \E \big [ \big (\partial _{\theta_2} Z_{(\theta_1,\theta_2)} \big )^2 \big ] +C_{12}(\Theta) \,\sup_{(\theta_1,\theta_2)\in \Theta} \E \big [ \big (\partial^2 _{\theta_1\theta_2} Z_{(\theta_1,\theta_2)}\big )^2  \big ].
\end{multline}
Now, using the previous point 1., we obtain that for any $t\geq 1$:
$$
\sup_{(\theta_1,\theta_2)\in \Theta} \E \big [ Z^2_{(\theta_1,\theta_2)}  \big ]=\sup_{(\theta_1,\theta_2)\in \Theta} \E \big [ \big (M_{(\theta_1,\theta_2)}^t-\widetilde M_{(\theta_1,\theta_2)}^t \big )^2  \big ]\leq \widetilde C \,t^{2\overline d-1}.
$$ 
From point 2., the same bound can be established as well for $\sup_{(\theta_1,\theta_2)\in \Theta} \E \big [ \big (\partial _{\theta_1} Z_{(\theta_1,\theta_2)} \big )^2 \big ] $ and for $\sup_{(\theta_1,\theta_2)\in \Theta}\E \big [ \big (\partial _{\theta_1} Z_{(\theta_1,\theta_2)} \big )^2 \big ]$. From a straightforward  extension to $\partial^2 _{\theta_1\theta_2} Z_{(\theta_1,\theta_2)}$, \eqref{Parzen} implies that there exists $\overline C>0$:
$$
\E \big [\sup_{(\theta_1,\theta_2)\in \Theta}\big (M_{(\theta_1,\theta_2)}^t-\widetilde M_{(\theta_1,\theta_2)}^t \big )^2 \big ] \leq \overline C \,t^{2\overline d-1}.
$$
\end{proof}
\begin{proof}[Proof of Proposition \ref{consLARCH}]
For $\theta \in \Theta$, denote:
\begin{equation} \label{In}
I_n(\theta):=\frac1 n \,  \sum_{t=1}^n \Phi\big ( (X_{t-k})_{k\geq 0} , \theta \big )~~\mbox{and}~~ \widetilde I_n(\theta):=\frac1 n \,  \sum_{t=1}^n \Phi\big ((\widetilde X_{t-k})_{k\geq 0} , \theta \big ),
\end{equation}
where $\Phi$ is defined in \eqref{phiLAV}. The proof will be stepped in 3 points:\\
~\\
{\bf 1.} We prove here that $\sup_{\theta \in \Theta} \big |  I_n(\theta)-I(\theta) \big | \limiteasn 0$, with
\begin{equation} \label{I}
I(\theta):= \E \big [ \Phi\big ( (X_{-k})_{k\geq 0} , \theta \big )\big ]~~\mbox{for $\theta \in \Theta$}.
\end{equation} 
Indeed, from Doukhan and Wintenberger (2008), there exists a function $H: \R^\N \to \R$ such as for any $t\in \Z$, $X_t=H((\xi_{t-j})_{j\geq 0})$ and therefore $(X_t)_{t\in \Z}$ is a second order ergodic stationary sequence since $r=2$. Then, for $\theta \in \Theta$, there exists $H_\Phi:\R^\N \to [0,\infty)$ such that
$$
\Phi\big ( (X_{-k})_{k\geq 0} , \theta \big )=\Phi\big ( \big ((H(\xi_{k-j}))_{j\geq 0}\big )_{k\geq 0} , \theta \big )=H_\Phi \big ( (\xi_{-j})_{j\geq 0}\big ),
$$
with also $\E \big [\big | \Phi\big ( (X_{-k})_{k\geq 0} , \theta \big ) \big |\big ]<\infty$. Then using Theorem 36.4 in Billingsley (1995),  $\big (\Phi\big ( (X_{-k})_{k\geq 0} , \theta \big )\big )_{t\in \Z}$ is an ergodic stationary sequence for any $\theta\in \Theta$. As a consequence, for any $\theta \in \Theta$,
$$
 I_n(\theta) \limiteasn I(\theta).
$$
Moreover, since $\Theta$ is a compact set, using Theorem 2.2.1. in Straumann (2005), we deduce that $\big (\Phi\big ( (X_{t-k})_{k\geq 0} , \theta \big )\big )_{t\in \Z}$ also follows	a uniform ergodic theorem  and we obtain $\sup_{\theta \in \Theta} \big |  I_n(\theta)-I(\theta) \big | \limiteasn 0$.\\
~\\
{\bf 2.} We also have $\sup_{\theta \in \Theta} \big |  I_n(\theta)-\widetilde I_n(\theta) \big | \limiteasn 0$. We have
\begin{eqnarray} \label{inegI}
 \big |  I_n(\theta)-\widetilde I_n(\theta) \big |  & \leq  &\frac 1 n \, \sum_{t=1}^n \big | \Phi\big ( (X_{t-k})_{k\geq 0} , \theta \big ) -\Phi\big ( (\widetilde X_{t-k})_{k\geq 0} , \theta \big ) \big |,
\end{eqnarray}
and for any $\theta \in \Theta$,
\begin{eqnarray*}
\big |\Phi\big ( (X_{t-k})_{k\geq 0} , \theta \big ) -\Phi\big ( (\widetilde X_{t-k})_{k\geq 0} , \theta \big ) \big | & = & \Big | \big (|X_t|-\big|M_\theta(t)\big |\big )^2  -\big (|X_t|-\big|\widetilde M_\theta(t)\big |\big )^2 \Big | \\
& \leq & \big |M_\theta(t)- \widetilde M_\theta(t)\big | \,
 \Big (2 \,  |X_t| + \big |M_\theta(t)\big | + \big |\widetilde M_\theta(t)\big | \Big ),
\end{eqnarray*}
with $M_\theta(t)$ and $\widetilde M_\theta(t)$ defined in \eqref{Mt}. 
Therefore, using Cauchy-Schwarz and Minkowski inequalities,
\begin{eqnarray}
\nonumber \E \Big [\sup_{\theta \in \Theta} \big |\Phi\big ( (X_{t-k})_{k\geq 0} , \theta \big ) -\Phi\big ( (\widetilde X_{t-k})_{k\geq 0} , \theta \big ) \big |\Big ]\quad &&\\
\nonumber & & \hspace{-7.3cm} \leq \E \Big [\sup_{\theta \in \Theta}\Big \{ \big |M_\theta(t)- \widetilde M_\theta(t)\big |\Big \} \,
 \Big (2 \, |X_t|+ \sup_{\theta \in \Theta} \Big \{ \big |M_\theta(t)\big | + \big |\widetilde M_\theta(t)\big | \Big \} \Big ) \Big ] \\ 
\label{pr1} && \hspace{-7.3cm} \leq  \Big ( \E \Big [\sup_{\theta \in \Theta}\Big \{ \big (M_\theta(t)- \widetilde M_\theta(t)\big )^2\Big \} \Big ] \Big ) ^{1/2} \Big (2 \,  \|X_t\|_2 +  \Big (\E \Big [\sup_{\theta \in \Theta} \big |M_\theta(t)\big |^2 \Big ] \Big )^{1/2} \hspace{-3mm}+ \Big (\E \Big [\sup_{\theta \in \Theta} \big |\widetilde  M_\theta(t)\big |^2 \Big ] \Big )^{1/2}  \Big ).~~
\end{eqnarray}
Now,  since $\Theta$ is a compact set included in $\Theta(2)$, we have $\|X_t\|_2 <\infty$. Moreover, using Lemma \ref{LemM}, point 3., for $t=1$, we have $\E \Big [\sup_{\theta \in \Theta} \big |M_\theta(1)\big |^2 \Big ]=\E \Big [\sup_{\theta \in \Theta} \big |M_\theta(t)\big |^2 \Big ] \leq \overline C$. \\
With $\big |\widetilde  M_\theta(t)\big |^2\leq 2 \, \big (M_\theta(t)-\widetilde M_\theta(t) \big )^2 +2 \,\big (M_\theta(t)\big )^2$, this also implies that $\E \Big [\sup_{\theta \in \Theta} \big |\widetilde M_\theta(t)\big |^2 \Big ] \leq 4 \, \overline C$. Therefore,  there exists $C>0$ such as for any $t\geq 1$,
\begin{equation} \label{grs1}
 \Big (2 \,  \|X_t\|_2 +  \Big (\E \Big [\sup_{\theta \in \Theta} \big |M_\theta(t)\big |^2 \Big ] \Big )^{1/2} + \Big (\E \Big [\sup_{\theta \in \Theta} \big |\widetilde  M_\theta(t)\big |^2 \Big ] \Big )^{1/2}  \Big )\leq C. 
\end{equation}
Therefore, from \eqref{pr1}, \eqref{grs1} and Lemma \ref{LemM}, point 3., we deduce that there exists $C>0$ such as:
\begin{eqnarray*} \label{pr2}
 \E \big [\sup_{\theta \in \Theta} \big |\Phi\big ( (X_{t-k})_{k\geq 0} , \theta \big ) -\Phi\big ( (\widetilde X_{t-k})_{k\geq 0} , \theta \big ) \big |\big ] \leq  C \, t^{2\overline d-1} \quad\mbox{for any $t\geq 1$}.
\end{eqnarray*}
Then, using this bound and since $\overline d<1/2$, there exists $C>0$ such as:
\begin{equation}\label{Kou}
\sum_{t=1}^n \frac 1 t \, \E \big [\sup_{\theta \in \Theta} \big |\Phi\big ( (X_{t-k})_{k\geq 0} , \theta \big ) -\Phi\big ( (\widetilde X_{t-k})_{k\geq 0} , \theta \big ) \big |\big ] 
\leq C \, \sum_{t=1}^n  t^{2\overline d-2}\leq C \, \sum_{t=1}^\infty   t^{2\overline d-2}<\infty .
\end{equation}
In Corollary 1 of Kounias and Weng (1969), it is established that $\displaystyle\sum_{t=1}^\infty \frac{\E\big [ |Z_t| \big ]}{b_t} <\infty$ implies $\displaystyle\frac 1 {b_n} \, \sum_{t=1}^n Z_t \limiteasn 0$ for a $\L^1$ sequence of r.v. $(Z_t)_t$ and $b_n\limiten \infty$. Therefore, with $b_t=t$ for $t\in \N^*$, \eqref{Kou} leads to 
$$
\frac 1 n \, \sum_{t=1}^n \sup_{\theta \in \Theta} \big |\Phi\big ( (X_{t-k})_{k\geq 0} , \theta \big ) -\Phi\big ( (\widetilde X_{t-k})_{k\geq 0} , \theta \big ) \big | \limiteasn 0,
$$
and therefore to $\displaystyle \sup_{\theta \in \Theta} \big |  I_n(\theta)-\widetilde I_n(\theta) \big | \limiteasn 0$ from \eqref{inegI}.\\
~\\
{\bf 3.} The two previous points show us that $\sup_{\theta \in \Theta} \big | \widetilde I_n(\theta)- I(\theta)\big | \limiteasn 0$ with $I$ defined in \eqref{I}. The proof is achieved if we establish that $\theta^*$ is the unique minimum of $\theta \in \Theta \mapsto I(\theta)$. This is induced by the following computations:
\begin{eqnarray*}
I(\theta) & =& \E \big [ \Phi\big ( (X_{-k})_{k\geq 0} , \theta \big )\big ] \\
& =& \E \Big [ \Big (|\xi_0| \, \big|a_0(\theta^*)+\sum_{j=1}^{\infty}a_j(\theta^*) \, X_{-j}\big |-\big|a_0(\theta)+\sum_{j=1}^{\infty}a_j(\theta) \, X_{-j}\big |\Big )^2\Big ] \\
& =& \E \big [\xi_0^2-1 \big ]  \, \E \Big [ \Big ( \big|a_0(\theta^*)+\sum_{j=1}^{\infty}a_j(\theta^*) \, X_{-j}\big | \Big )^2 \Big ] \\
&& \hspace{2cm}+    \E \Big [ \Big (\big|a_0(\theta)+\sum_{j=1}^{\infty}a_j(\theta) \, X_{-j}\big | -  \big|a_0(\theta^*)+\sum_{j=1}^{\infty}a_j(\theta^*) \, X_{-j}\big | \Big )^2 \Big ], 
\end{eqnarray*}
using the assumption $\|\xi_0\|_1=1$ and because $(X_t)$ is a causal time series implying that $\xi_0$ independent to $\sigma \big \{ (X_{-k})_{k\geq 1} \big \}$. The first term of the previous relationship does not depend on $\theta$. The second one vanishes when $\theta = \theta^*$. It is also non negative and it vanishes if 
$$
\big|a_0(\theta)+\sum_{j=1}^{\infty}a_j(\theta) \, X_{-j}\big | = \big|a_0(\theta^*)+\sum_{j=1}^{\infty}a_j(\theta^*) \, X_{-j}\big |\quad a.s.         
$$
As we assumed that $a_0(\cdot)$ is a positive function, using also Assumption Id$(\Theta)$, we deduce that $\theta=\theta^*$ is the only solution of the previous equality. As a consequence, $\theta^*$ is the unique minimizer of $I(\cdot)$ and since $\sup_{\theta \in \Theta} \big | \widetilde I_n(\theta)- I(\theta)\big | \limiteasn 0$ and $\displaystyle \widehat \theta_n = \widehat \theta_n=\mbox{Arg}\! \min_{\! \! \! \!\theta \in \Theta} \widetilde I_n(\theta)$, we deduce that $\widehat \theta_n \limiteasn \theta^*$.
\end{proof}

\begin{proof}[Proof of Corollary \ref{consLARCHp}]
In such a case, $\theta={}^t(a_0,\ldots,a_p)$ and therefore $a_i(\theta)=a_i$ for any $0\leq i \leq p$ and $a_i(\theta)=0$ for $i\geq p+1$. Then, $\theta \in \Theta \mapsto a_k(\theta)$ are ${\cal C}^\infty$ functions on $\Theta$ for any $k\in \N$. Moreover, since $\sup_{\theta \in \Theta}|a_k(\theta)|=0$ for $k\geq p+1$ as well as $\sup_{\theta \in \Theta}\big \|\partial^j_{\theta^j} a_k(\theta)\big \|=0$ for any $1\leq j\leq p+1$, therefore \eqref{CondAN} is satisfied and Assumption $C_{p+1}(\Theta)$ holds. Since $\sum_{k=1}^\infty\sup_{\theta \in \Theta}|a_k(\theta)| <\infty$, Assumption (S) is also satisfied.
Finally,  Assumption Id$(\Theta)$ is obviously satisfied: $a_i(\theta)=a_i(\widetilde  \theta)$ implies $a_i=\widetilde a_i$, which implies $\theta={}^t(a_0,\ldots,a_p)=\widetilde  \theta={}^t(\widetilde  a_0,\ldots,\widetilde  a_p)$ for any $i\in \N$. Then the strong consistency of $\widehat \theta_n$ is established from Proposition \ref{consLARCH}.
\end{proof}
\begin{proof}[Proof of Corollary \ref{consGLARCH}]
Denote $\theta=\big (\theta_P,\theta_Q\big ) \in \Theta$ where $\theta_P={}^t \big ( c_0,c_1,\ldots,c_p\big ) \in (0,\infty)\times \R^{p}$, $\theta_Q={}^t \big (d_1,\ldots,d_q\big )\in \R^{q}$. Then $\sigma=P^{-1}_{\theta_P}(B) \, Q_{\theta_Q}(B) X$. It is clear that $\theta_P \to P_{\theta_P}$ is an injective function, and it is the same for $\theta_P \to P^{-1}_{\theta_P}$ and $\theta_Q \to  Q_{\theta_Q}$. Finally it is also the same for $\theta={}^t\big (\theta_P,\theta_Q \big ) \to P^{-1}_{\theta_P}\times  Q_{\theta_Q}$ , because $P_{\theta_P}$ and $Q_{\theta_Q}$ are not zero polynomial and because we assume that the $p+1$ components of $\theta_P$ are free of the $q$ components $\theta_Q$, {\it i.e.} there are no supposed links between  $(c_i)_{0\leq i\leq p} $ and $(d_j)_{1\leq j \leq q}$. Therefore, \eqref{identLARCH} and Assumption Id$(\Theta)$ are satisfied for GLARCH$(p,q)$ process. \\
Moreover, as it is well known for GARCH$(p,q)$ processes, since $\sum_{j=1}^q |d_j|\leq \rho$ with $0\leq \rho<1$ from the expression of $\Theta_{p,q}(2)$, the roots $z_j(\theta)$ of the characteristic polynomial $\xi(z)=z^q-\sum_{j=1}^qb_jz^{q-j}$ satisfies $\sup_\theta \max_j |z_j(\theta)|<1$: $(a_k(\theta))_k$ decreases exponentially fast towards $0$. Then $\sum_{k=1}^\infty \sup_{\theta \in \Theta}|a_k(\theta)|<\infty$ and Assumption (S) is satisfied. Moreover,  there exists $c_0>0$ such as $\sup_{\theta \in \Theta}|a_k(\theta)|\leq c_0\, k^{-3/4}$ for any $k\geq 1$. By considering the derivatives of equation \eqref{GLARCH}, we also have $\sup_{\theta \in \Theta}\big \|\partial^j _{\theta^j} a_k(\theta)\big \|\leq c_j\, k^{-3/4}$ with $c_j>0$ for any $k,j\geq 1$ and  Assumption $C_{p+q+1}(\Theta)$ holds.   
\end{proof}
\begin{proof}[Proof of Corollary \ref{consLRD}]
The assumptions of Corollary \ref{consLRD} are exactly the same as those of Proposition \ref{consLARCH}. It remains for us to prove that \eqref{CondAN} could be satisfied under the condition \eqref{LRD}, {\it i.e.} there exists $d(\theta)\in (0,1/2)$ and $L_\theta(\cdot)$ a slowly varying function such that  $a_j(\theta)=L_\theta(j)\, j^{d(\theta)-1}$ for $j \in \N^*$. Since $\theta \in \Theta$ a compact set, there exists $D \in (0,1/2)$ such that $d(\theta)\leq D$ for any $\theta \in \Theta$. Moreover, since $L_\theta(\cdot)$ is a slow varying function and $\theta \in \Theta$ a compact subset of $\R^\ell$, there exists $C_L>0$ such that $\sup_{\theta \in \Theta}\big |L_\theta(j)\big | \leq C_L \, j^{1/4-D/2}$ for any $j\in \N^*$, with $1/4-D/2>0$. As a consequence,
$$
\sup_{\theta \in \Theta} \big | a_j(\theta) \big | \leq C_L\, j^{1/4-D/2} j^{D-1} \leq C_L\, j^{D/2-3/4} \quad \mbox{for any $j\in \N^*$}.
$$
With $\overline d=D/2+1/4<1/2$, Assumption $C_0(\Theta)$ is verified and Corollary \ref{consLRD} is established.
\end{proof}
\begin{proof}[Proof of Corollary \ref{corBeran}]
We have to prove that the assumptions of  Corollary \eqref{consLRD} are satisfied in this particular case of long memory LARCH$(\infty)$ process.\\
First, we have $\theta={}^t(a_0,c,d) \in \Theta \subset \R^3$ and Assumption $C_3(\Theta)$ has to be verified. It is clear that $a_k:\theta \mapsto c\, k^{d-1}$ is a ${\cal C}^\infty$ function on $\Theta$, with $\theta$ defined in \eqref{ThetaLRD2}. For $k\geq 3$, easy computations imply that:
\begin{equation*}
\sup_{\theta \in \Theta} \big \|\partial^j _{\theta^j} a_k(\theta)\big \| \leq  \sqrt{c_M^2+j}\times  \log k \times k^{d_M-1} \quad \mbox{for $1\leq j\leq 3$}.
\end{equation*}
Therefore, by considering for instance $\overline d=d_M/2+1/2 \in (d_M,1/2)$, it is clear that there exists $C_a>0$ such as \eqref{CondAN} is satisfied and Assumption $C_3(\Theta)$ holds.
~\\
It remains to prove Assumption Id$(\Theta)$. This one will be verified by considering the equality $a_j(\theta)=a_j(\widetilde \theta)$ for any $j\in \N$ where $\theta={}^t(a_0,c,d)$ and $\widetilde \theta={}^t(\widetilde a_0,\widetilde c,\widetilde d)$. This implies $a_0=\widetilde  a_0$ and $c \, j^{d-1}=\widetilde  c\, j^{\widetilde d-1}$ for any $j\in \N$, leading to $\theta=\widetilde \theta$: Assumption Id$(\Theta)$ is also satisfied. 
\end{proof}
\begin{proof}[Proof of Theorem \ref{ANLARCH}]
Let $I_n(\theta)$ and $\widetilde I_n(\theta)$ be defined in \eqref{In}.  We follow a proof that is similar to the one of Theorem 2 in Davis and Dunsmuir (1997). \\
Let $v =\sqrt n (\theta-\theta^*) \in \R^\ell$ and define 
\begin{eqnarray*} 
W_n(v)&=&  \sum_{t=1}^n \Phi\big ( (X_{t-k})_{k\geq 0} , \theta^*+n^{-1/2} v\big ) -\Phi\big ( (X_{t-k})_{k\geq 0} , \theta^*\big ) =n\,\big (I_n(\theta)-I_n(\theta^*) \big)\\
\mbox{and}\quad \widetilde W_n(v)&=&   \sum_{t=1}^n \Phi\big ( (\widetilde X_{t-k})_{k\geq 0} , \theta^*+n^{-1/2} v\big ) -\Phi\big ( (\widetilde X_{t-k})_{k\geq 0} , \theta^*\big )= n\,\big (  I_n(\theta)- \widetilde I_n(\theta^*) \big).
\end{eqnarray*}
Then we are going to prove first that minimizing $\widetilde I_n(\theta)$ with respect to $\theta \in \Theta$ is equivalent to minimize $\widetilde W_n(v)$ with respect to $v \in \R^\ell$, which is also equivalent to minimize $W_n(v)$ with respect to $v \in \R^\ell$. 
Secondly, we will provide a limit theorem satisfied by $W_n(v)$ for any $v \in \R^\ell$.   Then we are going to prove in 3. that $(W_n(\cdot ))_n$ converges as a process of ${\cal C}(\R^\ell)$ (space of continuous functions on $\R^\ell$) to a limit process $W$. Hence the sequence of minimum of  $\widetilde W_n$, {\it i.e} $(\widehat v_n)_n$ with $\widehat v_n=\sqrt n (\widehat \theta_n-\theta^*)$, will converge in distribution to the distribution of the minimum of $W(\cdot)$. \\
~\\
1.  For any $v \in \R^\ell$ and $n\geq 1$, we have:
\begin{eqnarray} 
\nonumber
W_n(v)
 &=& \sum_{t=1}^n \Big (\big |X_t\big |-\big |M_{\theta^*+n^{-1/2} v}(t)\big |\Big )^2-\Big (\big |X_t\big |-\big | M_{\theta^*}(t)\big |\Big )^2 \\
\nonumber &=&  \sum_{t=1}^n \Big (\big |X_t\big |-\Big |M_{\theta^*}(t)-\frac 1 {\sqrt n} \, {}^t v \,\partial _\theta M_{\overline \theta^{(n)}_t}(t) )\Big |\Big )^2-\Big (\big |X_t\big |-\big |M_{\theta^*}(t)\big |\Big )^2 \\
\nonumber &=&  \sum_{t=1}^n \Big (\big |X_t\big |-\big |M_{\theta^*}(t)\big |-\frac 1 {\sqrt n} \, {}^t v \,\partial _\theta M_{\overline \theta^{(n)}_t}(t)\times \mbox{sgn}(M_{\theta^*}(t) )\Big )^2-\Big (\big |X_t\big |-\big |M_{\theta^*}(t)\big |\Big )^2 \\
\nonumber &=& -\frac 2 {\sqrt  n} \, \sum_{t=1}^n \big (\big |X_t\big | -\big |M_{\theta^*}(t)\big |\big )\times \mbox{sgn}(M_{\theta^*}(t) )\times  {}^t v \,\partial _\theta M_{\overline \theta^{(n)}_t}(t)+ \frac 1 n \, \sum_{t=1}^n  \big ({}^t v \,\partial _\theta M_{\overline \theta^{(n)}_t}(t) )\big )^2 \\
\label{J1J2}&=& J_1^{(n)}(v)+J_2^{(n)}(v)
\end{eqnarray}
with $\overline \theta^{(n)}_t=\alpha^{(n)}_t \, \theta^*+(1-\alpha^{(n)}_t )\big ( \theta^*+ n^{-1/2} v \big )$ where $\alpha^{(n)}_t \in [0,1]$ is given from the Taylor-Lagrange expansion. \\
~\\
{\bf Term $J_2^{(n)}(v)$:} For any $v \in \R^\ell$, $\overline \theta^{(n)}_t \limiteasn \theta^*$ and then 
\begin{equation}\label{MM}
\Big |\partial _\theta M_{\overline \theta^{(n)}_t}(t)-\partial _\theta M^t_{\theta^*)} \Big | \limiteasn 0\quad \mbox{for any $t\in \N$}, 
\end{equation}
since the functions $\theta \in \Theta \mapsto \partial _\theta a_i(\theta)$ are supposed to be continuous functions for any $i \in \N$. Then we obtain for any $v \in \R^\ell$:
\begin{multline*}
\Big |J_2^{(n)}(v)-\E \big [ \big ({}^t v \,\partial _\theta M_{\theta^*}(0) \big )^2 \big ]  \Big |\leq \frac 1 n \, \sum_{t=1}^n  \Big |\big ({}^t v \,\partial _\theta M_{\theta^*}(t)\big )^2- \big ({}^t v \,\partial _\theta M_{\overline \theta^{(n)}_t}(t)\big )^2\Big | \\
+  \Big |\frac 1 n \, \sum_{t=1}^n  \big ({}^t v \,\partial _\theta M_{\theta^*}(t)\big )^2-\E \big [ \big ({}^t v \,\partial _\theta M_{\theta^*}(0) \big )^2 \big ]  \Big |. 
\end{multline*}
Now using Cesaro Lemma we obtain from \eqref{MM}, 
\begin{equation}\label{Cesar}
\Big |\frac 1 n \, \sum_{t=1}^n  \big ({}^t v \,\partial _\theta M_{\theta^*}(t)\big )^2-\E \big [ \big ({}^t v \,\partial _\theta M_{\theta^*}(0) \big )^2 \big ]  \Big |\limiteasn 0. 
\end{equation}
Moreover,  we have seen that there exist a function $H$ such as $X_t=H((\xi_{t-j})_{j\geq 0})$ for any $t\in \Z$, and therefore $(X_t)_{t\in \Z}$ is a second order ergodic stationary sequence. Then, for any $v \in \R^\ell$, there exists a function $H_v:\R^\N \to [0,\infty)$ such as
$$
\big ({}^t v \,\partial _\theta M_{\theta^*}(t)\big )^2=H_v \big ( (\xi_{-j})_{j\geq 0}\big ),
$$
with also $\E \big [\big ({}^t v \,\partial _\theta M_{\theta^*}(t)\big )^2\big ]<\infty$ (see 1. of Lemma \ref{LemM}). Then using Theorem 36.4 in Billingsley (1995),  $\big (\big ({}^t v \,\partial _\theta M_{\theta^*}(t)\big )^2\big )_{t\in \Z}$ is an ergodic stationary sequence implying to:
\begin{equation}\label{Ergo}
\frac 1 n \, \sum_{t=1}^n  \Big |\big ({}^t v \,\partial _\theta M_{\theta^*}(t)\big )^2- \E \big [ \big ({}^t v \,\partial _\theta M_{\theta^*}(0) \big )^2 \big ] \Big | \limiteasn 0. 
\end{equation}
Finally, with \eqref{Cesar} and \eqref{Ergo}, we obtain for any $v \in \R^\ell$,
\begin{equation}\label{LimJ2}
J_2^{(n)}(v) \limiteasn \E \big [ \big ({}^t v \,\partial _\theta M_{\theta^*}(0) \big )^2 \big ] = {}^t v \, \Gamma^*_1 \, v\quad \mbox{where} \quad \Gamma_1^* :=\E \Big [ \partial _\theta M_{\theta^*}(0)  \times {}^t  \partial _\theta M_{\theta^*}(0)\Big ],
\end{equation}
where $\Gamma_1^*$ is as in  \eqref{Gamma1}. \\
~\\
{\bf Term $J_1^{(n)}(v)$:} We also have 
\begin{eqnarray*}
J_1^{(n)}(v)&=&-\frac 2 {\sqrt  n} \, \sum_{t=1}^n \big (\big |M_{\theta^*}(t) \, \xi_t\big | -\big |M_{\theta^*}(t)\big |\big ) \times \mbox{sgn}(M_{\theta^*}(t) ) \times  {}^t v \,\partial _\theta M_{\overline \theta^{(n)}_t}(t) \\
&=& {}^t v \, \Big (-\frac 2 {\sqrt  n} \, \sum_{t=1}^n \big (\big |\xi_t\big | -1 \big )\, M_{\theta^*}(t) \times \partial _\theta M_{\theta^*}(t) \\
&& \hspace{3cm}+\frac 2 {n} \, \sum_{t=1}^n \big (\big |\xi_t\big | -1 \big )\, M_{\theta^*}(t) \times  \sqrt n \, \big 
(\partial _\theta M_{\theta^*}(t)- \partial _\theta M_{\overline \theta^{(n)}_t}(t) \big ) \Big ) \\
&=& {}^t v \,\big (K^{(n)}_1(v)+K^{(n)}_2(v)\big ).
\end{eqnarray*}
We have $\Big (\big (\big |\xi_t\big | -1 \big )\, M_{\theta^*}(t) \times  \partial _\theta M_{\theta^*}(t) \Big)_{t\in \N}$ that is a stationary ergodic martingale difference since with the $\sigma$-algebra ${\cal F}_t =\sigma\big \{ (X_{t-k})_{k\geq 1} \big \}$,
$$
\E \Big [ \big (\big |\xi_t\big | -1 \big )\, M_{\theta^*}(t) \times \partial _\theta M_{\theta^*}(t)  ~\Big | \, {\cal F}_t \Big ]=\E \big [ \big |\xi_t\big | -1  \big ] \, \E \Big [ M_{\theta^*}(t) \times \partial _\theta M_{\theta^*}(t)  \Big ]=0,
$$
because $(X_t)$ is a causal process and $\xi_t$ is independent of ${\cal F}_t$ and $\E \big [ \big | \xi_0 \big | \big ]=1$.  \\
Now since  $\Gamma_2^* :=\E \Big [\big (M_{\theta^*}(0) \big )^2  \partial _\theta M_{\theta^*}(0)  \times {}^t  \partial _\theta M_{\theta^*}(0)\Big ]$ is supposed to be a finite definite positive matrix (see also its expression in \eqref{Gamma2}), 
\begin{equation*}
\E \Big [ \big (\big |\xi_0\big | -1 \big )^2 \big  \| M_{\theta^*}(0) \times \partial _\theta M_{\theta^*}(0) \big \|^2 \Big ] 
= (\sigma_\xi^2 -1) \, \E \Big [ \big \|M_{\theta^*}(0) \times  \partial _\theta M_{\theta^*}(0)\big \|^2\Big ]  <\infty. 
\end{equation*} 
Then the central limit for stationary ergodic martingale difference, Theorem 18.3 of Billingsley (1968)  can be applied and we obtain for any $v \in \R^\ell$:
\begin{equation} \label{limK1}
K_1^{(n)}(v) \limiteloin K_1 \egaleloi {\cal N} \big ( 0 \, , \, 4 \,(\sigma_\xi^2 -1) \,  \Gamma_2^*  \big ). 
\end{equation} 
For any $t\in \Z$, by the  definition of $\overline \theta^{(n)}_t$, we have $\sqrt n \,(\overline \theta^{(n)}_t-\theta^*)=(1-\alpha_t^{(n)})\, v$ with $\alpha_t^{(n)}$ a random variable of ${\cal F}_t$ bounded in $[0,1]$. Therefore, using a Taylor-Lagrange expansion, we also have
\begin{equation*}
\Big |\sqrt n \, {}^t v \,\big 
(\partial _\theta M_{\theta^*}(t)- \partial _\theta M_{\overline \theta^{(n)}_t}(t) \big ) \Big | =(1-\alpha_t^{(n)}) {}^t v \, \partial _{\theta^2} M^t_{\overline {\overline \theta}^{(n)}_t} \, v 
\leq  \sup_{\theta \in \Theta} \big \| \partial^2 _{\theta^2} M_{\theta}(t)\big \|  \, \big \| v \big \|^2 \quad\mbox{for any $t\in \Z$,}
\end{equation*}
where $\overline {\overline \theta}^{(n)}_t=\beta_t^{(n)} \theta^* + (1-\beta_t^{(n)} )\overline \theta^{(n)}_t$ and $\beta_t^{(n)} \in [0,1]$. \\
We have $ \E \Big [ \Big (  \sup_{\theta \in \Theta} \big \| \partial^2 _{\theta^2} M_{\theta}(t)\big \| \Big )^2 \Big ]= \E \Big [ \sup_{\theta \in \Theta} \big \| \partial^2 _{\theta^2} M_{\theta}(0)-\partial^2 _{\theta^2} \widetilde M_{\theta}(0)\big \|^2 \Big ]$. Using the same reasoning than in 3. of Lemma \ref{LemM}, $\E \Big [ \sup_{\theta \in \Theta} \big \| \partial^2 _{\theta^2} M_{\theta}(0)-\partial^2 _{\theta^2} \widetilde M_{\theta}(0)\big \|^2 \Big ]<\infty$ when Assumption $C_{\ell+2}(\Theta)$ holds. As a consequence,
$$ \E \Big [ \Big (\sqrt n \, {}^t v \, \big 
(\partial _\theta M_{\theta^*}(t)- \partial _\theta M_{\overline \theta^{(n)}_t}(t) \big ) \Big )^2 \Big ] \leq  \E \Big [ \sup_{\theta \in \Theta} \big \| \partial^2 _{\theta^2} M_{\theta}(t)\big \|^2 \Big ] \times \big \| v \big \|^4<\infty.
$$
Moreover, since $\overline \theta^{(n)}_t\in {\cal F}_t$, we also have $(\partial _\theta M_{\theta^*}(t)- \partial _\theta M_{\overline \theta^{(n)}_t}(t) \big )  \in {\cal F}_t$ and from the previous bound,
\begin{multline}
\E \Big [ \Big | \big (|\xi_t| -1 \big )\, M_{\theta^*}(t) \times  \sqrt n \, {}^t v \big 
(\partial _\theta M_{\theta^*}(t)- \partial _\theta M_{\overline \theta^{(n)}_t}(t) \big )  \Big |\Big ] \\
=\E \big [ \big ||\xi_t| -1 \big |\big ] \times  \E \Big [ \Big | \big (\, M_{\theta^*}(t) \times  \sqrt n \, {}^t v \big 
(\partial _\theta M_{\theta^*}(t)- \partial _\theta M_{\overline \theta^{(n)}_t}(t) \big )  \Big |\Big ] \\
\leq \E \big [ \big ||\xi_t| -1 \big |\big ] \times  \E \Big [  \big (M_{\theta^*}(t) \big )^2 \Big ] \times \E \Big [\Big ( \sqrt n \, {}^t v \big 
(\partial _\theta M_{\theta^*}(t)- \partial _\theta M_{\overline \theta^{(n)}_t}(t) \big )  \Big )^2\Big ]<\infty. \label{mom1}
\end{multline}
Therefore $\big (|\xi_t| -1 \big )\, M_{\theta^*}(t) \times  \sqrt n \, {}^t v \big 
(\partial _\theta M_{\theta^*}(t)- \partial _\theta M_{\overline \theta^{(n)}_t}(t) \big )_t$ is a stationary causal sequence, and from \eqref{mom1}, Theorem 36.4 in Billingsley (1995)  implies  that for any $v \in \R^\ell$, 
\begin{equation}\label{K2}
{}^t v  \,K_2^{(n)}(v) \limiteasn \E \Big [\big (|\xi_t| -1 \big )\, M_{\theta^*}(t) \times  \sqrt n \, {}^t v \big 
(\partial _\theta M_{\theta^*}(t)- \partial _\theta M_{\overline \theta^{(n)}_t}(t) \big ) \Big ] =0.
\end{equation}
Finally, for any $v\in \R^\ell$, since $J_1^{(n)}(v)={}^t v  \, \big (K_1^{(n)}(v)+K_2^{(n)}(v)\big )$,   then $\displaystyle  J_1^{(n)}(v) \limiteloin {}^t v  \, K_1$ from \eqref{limK1} and \eqref{K2}, 
and with \eqref{LimJ2} this implies,
\begin{equation} \label{limWn}
W_n(v) \limiteloin {}^t v \, \Gamma^*_1 \, v  + {}^t v  \, K_1 \quad \mbox{with}\quad K_1\egaleloi {\cal N} \big ( 0\, , \, 4 \,(\sigma_\xi^2 -1) \,  \Gamma_2^*  \big ).
\end{equation} 
~\\
2. Asymptotically, for any $v\in \R^\ell$, from 1., we know that the law of $W_n(v)$ is the same as the law of:
$$
W'_n(v)=-\frac 2 {\sqrt  n} \, \sum_{t=1}^n \big (\big |\xi_t\big | -1 \big )\, M_{\theta^*}(t) \times  {}^t v \,\partial _\theta M_{\theta^*}(t)+\frac 1 n \, \sum_{t=1}^n  \big ({}^t v \,\partial _\theta M_{\theta^*}(t)\big )^2
$$
and we deduce the same kind of result for the law of $\widetilde W_n(v)$, which is asymptotically equivalent to the one of:
$$
\widetilde W'_n(v)=-\frac 2 {\sqrt  n} \, \sum_{t=1}^n \big (\big |\xi_t\big | -1 \big )\, \widetilde M_{\theta^*}(t) \times  {}^t v \,\partial _\theta \widetilde  M_{\theta^*}(t)+\frac 1 n \, \sum_{t=1}^n  \big ({}^t v \,\partial _\theta \widetilde  M_{\theta^*}(t)\big )^2.
$$
Therefore we obtain:
\begin{eqnarray}
\nonumber W'_n(v)-\widetilde W'_n(v)&=&-\frac 2 {\sqrt  n} \, \sum_{t=1}^n \big (\big |\xi_t\big | -1 \big )\, {}^t v \, \Big (  M_{\theta^*}(t) \, \partial _\theta M_{\theta^*}(t)-\widetilde M_{\theta^*}(t) \,\partial _\theta \widetilde  M_{\theta^*}(t) \Big )\\ 
&& \hspace{0.5cm} + {}^t v \, \Big ( \frac 1 n \, \sum_{s=1}^n \big ( \partial _\theta  M^s_{\theta^*} {}^t \partial _\theta  M^s_{\theta^*}- \partial _\theta \widetilde  M^s_{\theta^*} {}^t \partial _\theta \widetilde  M^s_{\theta^*} \big )\Big ) \, v \\ 
\label{WW}&& =L^{(n)}_1+L^{(n)}_2.
\end{eqnarray}
{\bf Term $L^{(n)}_1$:}  Using the causality of $(X_t)$, {\it i.e.} $\xi_t$ independent to $\sigma\{X_{t-1},X_{t-2},\ldots \}$ for any $t\in \Z$, we deduce that:
\begin{equation}\label{BC1}
\E \Big [ \frac 1 {\sqrt  n} \,  \sum_{t=1}^n \big (\big |\xi_t\big | -1 \big )\, {}^t v \, \big (  M_{\theta^*}(t) \, \partial _\theta M_{\theta^*}(t)-\widetilde M_{\theta^*}(t) \,\partial _\theta \widetilde  M_{\theta^*}(t) \big ) \Big ]=0,
\end{equation}
since $\E [|\xi_t|]=1$. Moreover, we have for any $t \geq 1$:
\begin{equation*}
\nonumber M_{\theta^*}(t) \, \partial _\theta M_{\theta^*}(t)-\widetilde M_{\theta^*}(t) \,\partial _\theta \widetilde  M_{\theta^*}(t)= M_{\theta^*}(t) \,\big (  \partial _\theta M_{\theta^*}(t)-\partial _\theta \widetilde  M_{\theta^*}(t) \big ) +\partial _\theta \widetilde  M_{\theta^*}(t) \, \big (M_{\theta^*}(t) -\widetilde M_{\theta^*}(t) \big ). 
\end{equation*} 
Using 1. and 2. of Lemma \ref{LemM}, we have for any $t \geq 1$:
\begin{eqnarray*}
\E \big [ \big (M_{\theta^*}(t)-\widetilde M_{\theta^*}(t) \big ) ^2 \big ]\leq  \widetilde C \, t^{2\overline d -1}\quad\mbox{and}\quad
\E \big [ \big \| \partial _\theta M_{\theta^*}(t)-\partial _\theta \widetilde  M_{\theta^*}(t)\big \| ^2 \big ]\leq \widetilde C \, t^{2\overline d -1}.
\end{eqnarray*}
As a consequence, using Cauchy-Schwarz and Minkowski inequalities,
\begin{equation}
\label{MdM} \big \|  M_{\theta^*}(t) \, \partial _\theta M_{\theta^*}(t)-\widetilde M_{\theta^*}(t) \,\partial _\theta \widetilde  M_{\theta^*}(t) \big \|_2  \leq  \sqrt{\widetilde C} \, \big (  \big \| M_{\theta^*}(0) \|_2   + \big \|\partial _\theta  M_{\theta^*}(0) \|_2   \big )\, t^{\overline d -1/2}.
\end{equation}
Therefore, there exists $C>0$ such as for any $n\geq 1$,
\begin{eqnarray*}
\E \Big [ \Big (\frac 1 {\sqrt  n} \,  \sum_{t=1}^n \big (\big |\xi_t\big | -1 \big )\, {}^t v \, \big (  M_{\theta^*}(t) \, \partial _\theta M_{\theta^*}(t)-\widetilde M_{\theta^*}(t) \,\partial _\theta \widetilde  M_{\theta^*}(t) \big ) \Big )^2 \Big ]&& \\
&& \hspace{-8cm} = \frac 1 n \,(\sigma^2_\xi-1) \,\|v\|^2 \, \sum_{t=1}^n   \big \|  M_{\theta^*}(t) \, \partial _\theta M_{\theta^*}(t)-\widetilde M_{\theta^*}(t) \,\partial _\theta \widetilde  M_{\theta^*}(t) \big \|^2_2 \\
&& \hspace{-8cm} \leq  \frac {C}  n \,(\sigma^2_\xi-1) \,\|v\|^2 \, \sum_{t=1}^n t^{2\overline d -1} \\
&& \hspace{-8cm} \leq \frac {C}  n \,(\sigma^2_\xi-1) \,\|v\|^2 \,\Big ( 1+  \int_{1}^n x^{2\overline d -1} dx \Big )\leq C' \,\|v\|^2  \, n^{2\overline d-1} 
\end{eqnarray*}
with $C'>0$ and for $n$ large enough, 
using again the fact that for $f$ a positive decreasing function $\sum_{k=1}^{n} f(k)\leq f(1)+\int_{1}^{n}f(t)dt$. Therefore, for any $v \in \R$, 
\begin{equation}\label{BC2}
\E \Big [ \Big (\frac 1 {\sqrt  n} \,  \sum_{t=1}^n \big (\big |\xi_t\big | -1 \big )\, {}^t v \, \big (  M_{\theta^*}(t) \, \partial _\theta M_{\theta^*}(t)-\widetilde M_{\theta^*}(t) \,\partial _\theta \widetilde  M_{\theta^*}(t) \big ) \Big )^2 \Big ]\limiten 0.
\end{equation}
Therefore, from \eqref{BC1} and \eqref{BC2}, we deduce $\E\big[L^{(n)}_1\big]=0$ and $\var\big (L^{(n)}_1\big )\limiten 0$. Using Bienaym\'e-Tchebytchev Inequality, this implies that for any $v\in \R^\ell$,
\begin{equation}\label{Mart1}
L^{(n)}_1 \limiteproban 0.
\end{equation}
{\bf Term $L^{(n)}_2$:} Using the same method, we also obtain that there exist
 $C''>0$ such as for any $s \in \{1,\ldots,n\}$,
\begin{equation}\label{dMdM} 
\big \| \partial _\theta M^s_{\theta^*} \, {}^t \partial _\theta M^s_{\theta^*}- \partial _\theta \widetilde M^s_{\theta^*} \,{}^t \partial _\theta \widetilde  M^s_{\theta^*} \big \|_2  \leq C'' \,s^{\overline d-1/2}.
\end{equation}
Now with \eqref{dMdM},  we can use again the result established in part 2. of the proof of Proposition \ref{consLARCH} based on the Corollary 1 of Kounias and Weng (1969) and obtain for any $n \in \N^*$,
$$
\sum_{s=1}^n \frac 1 s \, \big \| \partial _\theta M^s_{\theta^*} \, {}^t \partial _\theta M^s_{\theta^*}- \partial _\theta \widetilde M^s_{\theta^*} \,{}^t \partial _\theta \widetilde  M^s_{\theta^*} \big \|_1 \leq C'' \, \sum_{s=1}^n s^{\overline d-3/2}<\infty
$$
since $\overline d <1/2$. Therefore:
\begin{equation} \label{titi}
\frac 1 n \, \sum_{s=1}^n \big ( \partial _\theta  M^s_{\theta^*} {}^t \partial _\theta  M^s_{\theta^*}- \partial _\theta \widetilde  M^s_{\theta^*} {}^t \partial _\theta \widetilde  M^s_{\theta^*} \big ) \limiteasn 0,
\end{equation}
implying 
\begin{equation}\label{limlim1}
L^{(n)}_2 \limiteasn 0 \qquad\mbox{for any $v\in \R^\ell$.} 
\end{equation}
Finally from \eqref{WW}, \eqref{Mart1} and \eqref{limlim1}, we deduce that for any $v \in \R^\ell$,
\begin{equation*}
\big | W_n(v)- \widetilde W_n(v) \big | \limiteproban 0.
\end{equation*}
Using \eqref{limWn}, this implies
\begin{equation}\label{WWW} 
\widetilde W_n(v)  \limiteloin W(v):={}^t v \, \Gamma^*_1 \, v  + {}^t v  \, K_1 \quad \mbox{with}\quad K_1\egaleloi {\cal N} \big ( 0\, , \, 4 \,(\sigma_\xi^2 -1) \,  \Gamma_2^*  \big ).
\end{equation}
3. Now, using the same arguments than in the proof of Theorem 2 of Davis and Dunsmuir (1997), we deduce that finite distributions $(\widetilde  W_n(v_1),\cdots,\widetilde W_n(v_k))$ converge to $(W(v_1),\cdots,W(v_k))$ for any $(v_1,\cdots,v_k)\in (\R^\ell)^k$. Moreover, always following the proof of Theorem 2 of Davis and Dunsmuir (1997), $(W_n(v))_v$ converges to $(W(v))_v$ as a process on the continuous function space ${\cal C}^0(\R)$. \\
As a consequence, a maximum $\widehat v=\sqrt n \, \big (\widehat \theta_n -\theta^* \big )$ of $\widehat W_n(v)$ converges in distribution to the maximum of ${}^t v \, \Gamma^*_1 \, v  + {}^t v  \, K_1$, which is $\displaystyle \overline v:= -\frac 1 2 \, \big (\Gamma_1^* \big )^{-1} K_1 \egaleloi {\cal N} \Big ( 0\, , \,(\sigma_\xi^2 -1) \,  \big (\Gamma_1^* \big )^{-1}\Gamma_2^* \, \big (\Gamma_1^* \big )^{-1}\Big)$ and this implies \eqref{CLT}.
\end{proof}
\begin{proof}[Proof of Remark \ref{Slu}] Using the notations of the proof of Theorem \ref{ANLARCH}, we have:
$$
\widehat \Gamma_1=\frac 1 n \, \sum_{s=1}^n \partial_\theta \widetilde M_{\widehat \theta_n}(s)\, {}^t \partial_\theta \widetilde M_{\widehat \theta_n}(s)\quad \mbox{and}\quad \widehat \Gamma_2:=\frac 1 n \, \sum_{s=1}^n \big (M_{\widehat \theta_n}(s) \big )^2 \partial_\theta \widetilde M_{\widehat \theta_n}(s)\, {}^t \partial_\theta \widetilde M_{\widehat \theta_n}(s).
$$
Using the same computations than those required for establishing \eqref{dMdM}, we have for any $\theta \in \Theta$:
$$
\big \| \partial _\theta M_{\theta}(s) \times {}^t \partial _\theta M_{\theta}(s)- \partial _\theta \widetilde M_{\theta}(s) \times {}^t \partial _\theta \widetilde  M_{\theta}(s) \big \|_2 
 \leq   C'' \,s^{\overline d-1/2}.
$$
And following \eqref{titi}, we obtain for any $\theta \in \Theta$
$$
\frac 1 n \, \sum_{s=1}^n \big ( \partial _\theta  M_{\theta}(s)\times {}^t \partial _\theta  M_{\theta}(s)- \partial _\theta \widetilde  M_{\theta}(s)\times {}^t \partial _\theta \widetilde  M_{\theta}(s) \big ) \limiteasn 0
$$
Using Theorem 2.2.1. in Straumann (2005), we deduce that:
$$
\sup_{\theta \in \Theta} \Big | \frac 1 n \, \sum_{s=1}^n \big ( \partial _\theta  M_{\theta}(s)\times {}^t \partial _\theta  M_{\theta}(s)- \partial _\theta \widetilde  M_{\theta}(s)\times {}^t \partial _\theta \widetilde  M_{\theta}(s) \big )\Big | \limiteasn 0
$$
and this implies 
\begin{equation}\label{gam1proof}
\Big |\frac 1 n \, \sum_{s=1}^n \big ( \partial _\theta  M_{\widehat \theta_n}(s)\times {}^t \partial _\theta  M_{\widehat \theta_n}(s)- \partial _\theta \widetilde  M_{\widehat \theta_n}(s) \times{}^t \partial _\theta \widetilde  M_{\widehat \theta_n}(s) \big )\Big | \limiteasn 0.
\end{equation}
Moreover, for any $\theta \in \Theta$, since the process $\big ( \partial _\theta  M_{\theta}(s)\times {}^t \partial _\theta  M_{\theta}(s)\big )_s$ is a stationary causal sequence such as $\E \big [\big \|\partial _\theta  M_{\theta}(0)\times {}^t \partial _\theta  M_{\theta}(0) \big \| \big ]<\infty$, Theorem 36.4 in Billingsley (1995)  implies :
$$
\frac 1 n \, \sum_{s=1}^n  \partial _\theta  M_{\theta^*}(s) \times{}^t \partial _\theta  M_{\theta^*}(s)\limiteasn \Gamma_1^*.
$$
But for any $s \in \{1,\ldots,n\}$, $\theta\in \Theta \mapsto  \partial _\theta  M_{\theta}(s)$ is a continuous function and since $\widehat \theta_n \limiteasn \theta^*$,
\begin{equation}\label{asn}
\frac 1 n \, \sum_{s=1}^n  \partial _\theta  M_{\widehat \theta_n}(s)\times {}^t \partial _\theta  M_{\widehat \theta_n}(s)\limiteasn \Gamma_1^*.
\end{equation}
Using \eqref{gam1proof} and \eqref{asn}, we deduce that $\widehat \Gamma_1 \limiteasn \Gamma_1^*$. \\
~\\
Using the same reasoning as well as \eqref{MdM}, we also deduce that $\widehat \Gamma_2 \limiteasn \Gamma_2^*$.
\end{proof}
\begin{proof}[Proof of Corollary \ref{corGLARCH}]
Using exactly the same arguments as in the proof of Corollary \ref{consGLARCH} for establishing that Assumption $C_{\ell}(\Theta)$ holds with $\ell=p+q+1$, Assumption $C_{\ell+2}(\Theta)$ also holds. \\
Moreover, for any GLARCH$(p,q)$ process, the matrix $\Gamma_1^*$ and $\Gamma_2^*$ are positive definite matrix. Indeed, following the same reasoning as in the proof of Lemma 5 of Beran and Sch\"utzner (2009), we have for any $v \in \R^{p+q+1}$
\begin{eqnarray*}
{}^t v \, \Gamma_1^* \, v =\E \Big [\big ({}^t v \,\partial_\theta M_{\theta^*}(0) \big )^2   \Big ] \geq 0.
\end{eqnarray*}
Assume that ${}^t v \,\partial_\theta M_{\theta^*}(0)=0$. By stationarity, this implies ${}^t v \,\partial_\theta M_{\theta^*}(k)=0$ for any $k\in \Z$. Using relation \eqref{GLARCH}, we deduce that:
$$
\partial_\theta M_{\theta^*}(0)=\partial_\theta \big ( c_0+c_1\, X_{-1}+\cdots +c_p\, X_{t-p} \big )+ \partial_\theta\big ( d_1 \, M_{\theta^*}(-1)+\cdots + d_q \, M_{\theta^*}(-q) \big ).
$$                           
Then:
$$
\left \{ \begin{array}{ccc}
\partial_{c_0} M_{\theta^*}(0)&=&1 + d_1 \, \partial_{c_0} M_{\theta^*}(-1)+\cdots + d_q \, \partial_{c_0} M_{\theta^*}(-q)  \\
\partial_{c_i} M_{\theta^*}(0)&=&X_{-i} + d_1 \, \partial_{c_i} M_{\theta^*}(-1)+\cdots + d_q \, \partial_{c_i} M_{\theta^*}(-q) \quad 1\leq i \leq p  \\
\partial_{d_j} M_{\theta^*}(0)&=& M_{\theta^*}(-j) + d_1 \, \partial_{d_j} M_{\theta^*}(-1)+\cdots + d_q \, \partial_{d_j} M_{\theta^*}(-q) \quad 1\leq j \leq q
\end{array} \right .
$$
Therefore, if ${}^t v \,\partial_\theta M_{\theta^*}(0)=0$, then $ \big ( 1,  X_{-1}, \ldots,X_{-p},M_{\theta^*}(-1),\ldots,M_{\theta^*}(-q) \big )\, v=0$. Such equation has no solution since a linear relationship \eqref{GLARCH} relies all these random variables to $M_{\theta^*}(0)$ (or these means that $(X_t)$ would be a GLARCH$(p-1,q-1)$ process which is not possible since we have assumed that $P_{\theta_1^*}$ and $Q_{\theta_2^*}$ are coprime  polynomials). Hence, ${}^t v \,\partial_\theta M_{\theta^*}(0)=0$ implies $v=0$: $\Gamma_1^*$ is a positive definite matrix (and a similar reasoning leads to the same property satisfied by $\Gamma_2^*$).
\end{proof}
\begin{proof}[Proof of Corollary \ref{corLARCH}]
As we had already written, in the case of a LARCH$(p)$ process, we must have $\Theta \subset \Theta_p(4)$, which is defined in  \eqref{Thetap}. Therefore, choosing $\Theta$ as defined in \eqref{Thep} guarantees that it is a compact subset of $\Theta_p(4)$. Moreover, since a LARCH$(p)$ process is a particular case of a GLARCH$(p,q)$ process, Corollary \ref{corGLARCH} is satisfied under the conditions of Corollary \ref{corLARCH}. 
\end{proof}

\begin{proof}[Proof of Corollary \ref{corLRD}]
In the proof of Corollary \ref{consLRD}, it was established that \eqref{CondAN} could be satisfied under the condition \eqref{LRD}, {\it i.e.} there exists $d(\theta)\in (0,1/2)$ and $L_\theta(\cdot)$ a slowly varying function such that $a_j(\theta)=L_\theta(j)\, j^{d(\theta)-1}$ for $j \in \N^*$. The various assumptions required for the establishment of the central limit theorem \eqref{CLT} are also present in the assumptions of the Corollary \ref{corLRD}. And condition \eqref{LRD} is not in contradiction when Assumption $C_{\ell+2}(\Theta)$ holds.
\end{proof}
\begin{proof}[Proof of Corollary \ref{corBeran2}]
In the proof of Corollary \ref{corBeran}, Assumption $C_3(\Theta)$ holds. But for $k\geq 3$, we also obtain $
\sup_{\theta \in \Theta} \big \|\partial^j _{\theta^j} a_k(\theta)\big \|\leq  \sqrt{c_M^2+j}\times \log^4 k \times k^{d_M-1} $ for $j=4,\, 5$. 
Then always with $\overline d =d_M/2+1/2$, Assumption $C_5(\Theta)$ holds. \\
Moreover, {\em mutatis mutendis}, we can use again the proof of Lemma 5 of Beran and Sch\"utzner (2009) for proving that $\Gamma_1^*$ and $\Gamma_2^*$ are two positive definite matrix in this case of long memory LARCH$(\infty)$ process.
\end{proof}
\paragraph*{Aknowledgments} The author is very grateful to the referees and the editor for many relevant suggestions, corrections and comments that helped to notably improve the  paper. 
\section*{Data Availability Statement}
Data available from request from the author.\\ Used softwares are available on {\tt https://samm.univ-paris1.fr/Sofwares-Logiciels}


\begin{thebibliography}{}

\bibitem[Bardet , Wintenberger, 2009]{BW}
Bardet J.-M., Wintenberger O. 2009.
\newblock Asymptotic normality of the quasi-maximum likelihood estimator for
  multidimensional causal processes.
\newblock {\em Ann. Statist.} {\bf 37:} 2730--2759.

\bibitem[Beran, 2006]{Beran06}
Beran J. 2006.
\newblock On location estimation for larch processes.
\newblock {\em Journal of Multivariate Analysis} {\bf 97:} 1766--1782.

\bibitem[Beran \& Sch\"utzner, 2009]{Beran}
Beran J., Sch\"utzner M. 2009.
\newblock {On approximate pseudo-maximum likelihood estimation for
  LARCH-processes}.
\newblock {\em Bernoulli} {\bf 15:} 1057--1081.

\bibitem[Berkes \& Horv\'ath, 2003]{Berkes}
Berkes I., Horv\'ath, L. 2003.
\newblock Asymptotic results for long memory larch sequences.
\newblock {\em Annals of Applied Probability} {\bf 13:} 641--668.

\bibitem[Billingsley, 1968]{Billingsley1968}
Billingsley P. 1968.
\newblock {\em Convergence of Probability Measures}.
\newblock New York: John Wiley \& Sons Inc.

\bibitem[Billingsley, 1995]{Billingsley1995}
Billingsley P. 1995.
\newblock {\em Probability and Measure}.
\newblock New York: John Wiley \& Sons Inc.

\bibitem[Davis \& Dunsmuir, 1997]{DavisD}
Davis R., Dunsmuir W. 1997.
\newblock Least absolute deviation estimation for regression with {ARMA}
  errors.
\newblock {\em Journal of Theoretical Probability} {\bf 10:} 481--497.

\bibitem[Doukhan \& Wintenberger, 2008]{DW}
Doukhan P., Wintenberger O. 2008.
\newblock Weakly dependent chains with infinite memory.
\newblock {\em Stochastic Processes and Applications} {\bf 118:} 1997--2013.


\bibitem[Francq \& Zako\"ian, 2010]{FZ10}
Francq C., Zako\"ian J.-M. 2010.
\newblock Inconsistency of the mle and inference based on weighted ls for larch
  models.
\newblock {\em Journal of Econometrics} {\bf 159:} 151--165.

\bibitem[Giraitis et~al., 2004]{glrs}
Giraitis L., Leipus R., Robinson P., Surgailis D. 2004.
\newblock Larch, leverage and long memory.
\newblock {\em Journal of Financial Econometrics} {\bf 2:} 177--210.

\bibitem[Giraitis et~al., 2000]{GRS2000}
Giraitis L., Robinson P., Surgailis D. 2000.
\newblock {A model for long memory conditional heteroscedasticity}.
\newblock {\em The Annals of Applied Probability} {\bf 10:} 1002 -- 1024.

\bibitem[Giraitis et~al., 2009]{GLS2009}
Giraitis, L., Leipus, R., Surgailis, D. 2009. ARCH$(\infty)$ Models and Long Memory Properties. In: Mikosch, T., KreiB, JP., Davis, R., Andersen, T. (eds) Handbook of Financial Time Series. Springer, Berlin

\bibitem[Grublyte et~al., 2017]{GSS}
Grublyte G, Surgailis D, Skarnulis A. 2017.
\newblock Quasi-MLE for quadratic ARCH model with long memory.
\newblock {\em J. Time Ser. Anal.} {\bf 38:} 535--551.

\bibitem[Kounias \& Weng, 1969]{KW}
Kounias E., Weng T.-S. 1969.
\newblock An inequality and almost sure convergence.
\newblock {\em Annals of Mathematical Statistics} {\bf 48:} 1091--1093.

\bibitem[Ling, 2007]{Ling}
Ling S. 2007.
\newblock Self-weighted and local quasi-maximum likelihood estimators for
  arma-garch/igarch models.
\newblock {\em Journal of Econometrics} {\bf 140:} 849--873.

\bibitem[Nielsen \& Noel, 2021]{NN}
Nielsen M.\O., No\"el A.L. 2021.
\newblock To infinity and beyond: Efficient computation of ARCH$(\infty)$ models.
\newblock {\em J. Time Ser. Anal.} {\bf 42:} 338--354. 

\bibitem[Parzen, 1999]{Parzen}
Parzen E. 1999.
\newblock {\em Stochastic processes}.
\newblock Classics in Applied Mathematics, SIAM.

\bibitem[Robinson, 1991]{r91}
Robinson P. 1991.
\newblock Testing for strong serial correlation and dynamic conditional
  heteroskedasticity in multiple regression.
\newblock {\em Journal of Econometrics} {\bf 47:} 67--84.

\bibitem[Robinson \& Zaffaroni, 1997]{RobZaf}
Robinson P., Zaffaroni P. 1997.
\newblock Modelling nonlinearity and long memory in time series.
\newblock {\em Fields Institute Communications} {\bf 11:} 161--170.

\bibitem[Straumann, 2005]{Stra05}
Straumann D. 2005.
\newblock {\em Estimation in conditionally heteroscedastic time series models},
  volume 181 of {\em Lecture Notes in Statistics}.
\newblock Springer-Verlag, Berlin.

\bibitem[Truquet, 2014]{truquet_2014}
Truquet L. 2014.
\newblock On a family of contrasts for parametric inference in degenerate arch
  models.
\newblock {\em Econometric Theory} {\bf 30:} 1165--1206.

\end{thebibliography}

\end{document}